\setlist[enumerate]{font={\rm},itemsep=0.2\baselineskip}
\setlist[enumerate,1]{label={(\roman*)}}
\setlist[enumerate,2]{label={(\arabic*)}}
\newtheorem{theorem}{Theorem}[section]
\newtheorem{proposition}[theorem]{Proposition}
\newtheorem{lemma}[theorem]{Lemma}
\theoremstyle{definition}
\newtheorem{example}[theorem]{Example}
\newtheorem*{conjecture*}{Conjecture}
\newtheorem*{problem*}{Problem}
\theoremstyle{remark}
\def\GL{{\mathrm{GL}}}
\def\GammaL{{\Gamma \mathrm{L}}}
\def\AGL{{\mathrm{AGL}}}
\def\SL{{\mathrm{SL}}}
\def\PSL{{\mathrm{PSL}}}
\def\PGammaL{{\mathrm{P}\Gamma \mathrm{L}}}
\def\PSigmaL{{\mathrm{P}\Sigma \mathrm{L}}}
\def\Sp{{\mathrm{Sp}}}
\def\SU{{\mathrm{SU}}}
\def\PSU{{\mathrm{PSU}}}
\def\CGammaSp{{\mathrm{C}\Gamma\mathrm{Sp}}}
\def\PGammaU{{\mathrm{P}\Gamma\mathrm{U}}}
\def\SigmaL{{\Sigma\mathrm{L}}}
\def\S{{\mathrm{S}}}
\def\A{{\mathrm{A}}}
\def\Q{{\mathrm{Q}}}
\def\H{{\mathrm{H}}}
\def\M{{\mathrm{M}}}
\def\bbZ{{\mathbb{Z}}}
\def\bbF{{\mathbb{F}}}
\def\Sym{{\mathrm{Sym}}}
\def\bfN{{\mathbf{N}}}
\def\bfZ{{\mathbf{Z}}}
\def\bfC{{\mathbf{C}}}
\def\Hom{{\mathrm{Hom}}}
\def\Aut{{\mathrm{Aut}}}
\def\Hol{{\mathrm{Hol}}}
\def\leqs{\leqslant}
\def\geqs{\geqslant}
\def\calP{{\mathcal{P}}}
\def\Magma{{\sc Magma}}
\def\l{\langle}
\def\r{\rangle}
\def\a{\alpha}
\def\Pa{{\sf P}}
\def\rmC{{\mathrm{C}}}
\title{Finite semiprimitive permutation groups of rank $3$}
\author{Cai Heng Li}
\address{SUSTech International Center for Mathematics, and Department of Mathematics, Southern University of Science and Technology, Shenzhen, Guangdong, China}
\email{lich@sustech.edu.cn {\text{\rm(Li)}}}
\author{Hanyue Yi}
\address{Department of Mathematics, Southern University of Science and Technology, Shenzhen, Guangdong, China}
\email{12431017@mail.sustech.edu.cn {\text{\rm(Yi)}}}
\author{Yan Zhou Zhu}
\address{Department of Mathematics, Southern University of Science and Technology, Shenzhen, Guangdong, China}
\email{zhuyz@mail.sustech.edu.cn {\text{\rm(Zhu)}}}
\date{}
\begin{document}

\begin{abstract}
	A transitive permutation group is called \textit{semiprimitive} if each of its normal subgroups is either semiregular or transitive.
	The class of semiprimitive groups properly includes primitive groups, quasiprimitive groups and innately transitive groups.
	The latter three classes of rank $3$ permutation groups have been classified, making significant progress towards solving the long-standing problem of classifying permutation groups of rank $3$.
    In this paper, we complete the classification of finite semiprimitive groups of rank $3$, building on the recent work of Huang, Li and Zhu.
    Examples include Schur coverings of certain almost simple $2$-transitive groups and three exceptional small groups.
\end{abstract}

\maketitle

\section{Introduction}

The \textit{rank} of a transitive permutation group $X\leqs\Sym(\Omega)$ is defined as the number of orbits of $X$ acting on $\Omega\times \Omega$, which equals the number of $X_\a$-orbits on $\Omega$, where $\a\in\Omega$.
The study of finite rank $3$ groups dates back to work of D.~Higman~\cite{higman1964Finite} in the 1960s.
Indeed some important classes of rank $3$ permutation groups have been classified and well-characterized, refer to~\cite{bannai1971Maximal,foulser1969Solvable,kantor1979Permutation,kantor1982Rank,liebeck1987affine,liebeck1986finite} for the primitive case, \cite{devillers2011imprimitive} for the quasiprimitive case,~\cite{baykalov2023Rank} for the innately transitive case.
(Recall that a permutation group is called \textit{innately transitive} if it has a transitive minimal normal subgroup.)

These significant results have many important applications in various combinatorial objects, including partial linear spaces \cite{bamberg2021Partial,devillers2005classification,devillers2008classification} and combinatorial designs~\cite{biliotti2015designs,dempwolff2001Primitive,dempwolff2004Affine,dempwolff2006Symmetric}.

Let $X$ be a transitive permutation group on $\Omega$.
We say $X$ is \textit{semiprimitive} if it is non-regular and every normal subgroup is either transitive or semiregular.
If $X$ is innately transitive, then $X$ has a transitive minimal normal subgroup $N$.
Consequently, innately transitive groups are semiprimitive as centralizers of transitive normal subgroups are semiregular.
However, the converse is not true: a semiprimitive group is not necessarily innately transitive.
In particular, each solvable innately transitive group is primitive, and solvable semiprimitive groups are not necessarily primitive.
For example, the finite Coxeter group $2^3{:}\S_4$ can be viewed as a solvable semiprimitive permutation group on $96$ points, see~\cite[page 1737]{bereczky2008groups}.

The concept of semiprimitive groups was first introduced by Bereczky and Mar{\'o}ti~\cite{bereczky2008groups}, motivated by an application of permutation groups to collapsing transformation monoids.
The class of semiprimitive permutation groups has received considerable attention~\cite{devillers2019distinguishing,giudici2018theory,morgan2020Bounds}, and has been applied to the graph-restrictive groups in~\cite{potocnik2012graphrestrictive}.
Recently, a characterization of semiprimitive groups of rank $3$ is given in~\cite{huang2025finite}, and in this paper, we obtain a complete classification of such groups in Theorem~\ref{thm:semiprim}.

To state the theorem, we need to introduce a few notations.
Let $p$ be a prime.
We recall that a prime $r$ is called a \textit{primitive prime divisor} of $p^k-1$ if $r\mid (p^k-1)$ and $r\nmid(p^j-1)$ for any $j<k$.
Denote by $\mathrm{pdd}(p^k-1)$ the set of primitive prime divisors of $p^k-1$.
Note that $\PGammaL_d(q)=\PSL_d(q).\langle\delta,\phi\rangle$, where $\delta$ is a diagonal automorphism and $\phi$ is a field automorphism of $\PSL_d(q)$. 
Let $\PSigmaL_d(q)=\PSL_d(q).\langle\phi\rangle$.
Moreover, each subgroup $Y\leqslant \PGammaL_d(q)$ containing $\PSL_d(q)$ acts $2$-transitively on the set $\calP$ of $1$-spaces of $\bbF_q^d$.
Denote by $\Pa_1[Y]$ a maximal parabolic subgroup of $Y$ fixing a $1$-space.

\begin{theorem}\label{thm:semiprim}
    Let $X$ be a transitive permutation group on $\Omega$, and let $\alpha\in\Omega$.
    Assume that $X$ is not innately transitive, and $N$ is a non-trivial intransitive normal subgroup of $X$.
    Then $X$ is a semiprimitive rank $3$ group if and only if one of the following cases holds.
    \begin{enumerate}
        \item $N\cong\rmC_r$, $X=N.(\PSL_d(q).\mathcal{O})$ is non-solvable and $X_\alpha\cong\Pa_1[\PSL_d(q).\mathcal{O}]$, where
        \begin{enumerate}
            \item $N.\PSL_d(q)\unlhd X$ is a quotient of $\SL_d(q)$ and is a non-split extension;
            \item $X/N\cong \PSL_d(q).\mathcal{O}\leqslant\PGammaL_d(q)$, and $X/N\not\leqslant\PSigmaL_2(q)$ when $(d,r)=2$;
            \item $q=p^f$ with prime $p$, $r\in\mathrm{ppd}(p^{r-1}-1)$ and $X/\bfC_X(N)\cong\rmC_{r-1}$.
        \end{enumerate}
        \item The tuple $(X,X_\a,\Omega,N)$, with the number of representations, is as in Table~$\ref{tab:semi}$.
    \end{enumerate}
\end{theorem}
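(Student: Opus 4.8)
The plan is to exploit the normal semiregular subgroup $N$ to pass to a quotient action and then invoke the classification of finite $2$-transitive groups. Since $X$ is semiprimitive and $N$ is intransitive, $N$ is semiregular, so the $N$-orbits form an $X$-invariant block system and $N$ acts regularly on the block $\a^N$, which has size $|N|$. Writing $\bar X=X/N$ for the induced group on the block set $\bar\Omega=\Omega/N$, the stabiliser $X_\a$ fixes the block $\a^N$ setwise and hence permutes the three suborbits of $X$ among the fibres. I would first show, using the characterisation of semiprimitive rank $3$ groups from~\cite{huang2025finite}, that the non-trivial suborbit meeting $\a^N$ is contained in $\a^N$; equivalently one suborbit equals $\a^N\setminus\{\a\}$ and the other is its complement. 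This forces $\bar X$ to have rank $2$, that is, $\bar X$ is $2$-transitive on $\bar\Omega$, and forces $X_\a$ to act transitively by conjugation on $N\setminus\{1\}$, so that all non-identity elements of $N$ have equal prime order and $N$ is elementary abelian.

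The heart of the argument is then a reduction to the $2$-transitive almost simple groups via the classification of finite simple groups. The conjugation action realises $X/\bfC_X(N)$ as a subgroup of $\Aut(N)$ transitive on $N\setminus\{1\}$; combined with the requirement that $X$ be non-innately-transitive this rules out the affine $2$-transitive quotients and pins $\bar X$ to an almost simple group whose socle $T$ acts $2$-transitively on $\bar\Omega$. For each such $T$ I would examine the extensions $1\to N\to X\to\bar X\to 1$: the absence of a transitive minimal normal subgroup forces the preimage $N.T$ to be a perfect central extension, that is, a quotient of the Schur cover of $T$, and forces the extension to be non-split, since a complement would supply a transitive minimal normal subgroup and make $X$ innately transitive. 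As the relevant Schur multipliers are cyclic, this restricts $N\cong\rmC_r$ to a cyclic chief factor inside the multiplier of $T$. When $T=\PSL_d(q)$ acting on $\calP$, the multiplier contributes the family in case~(i); the condition that $X_\a=\Pa_1[\bar X]$ act transitively on $N\setminus\{1\}$, together with the order $\gcd(d,q-1)$ of the centre of $\SL_d(q)$, is what produces the primitive-prime-divisor condition $r\in\mathrm{ppd}(p^{r-1}-1)$ and the isomorphism $X/\bfC_X(N)\cong\rmC_{r-1}$, while the twisted $2$-transitive actions force the exclusion of $\PSigmaL_2(q)$ recorded in~(i)(b).

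Finally, the remaining $2$-transitive socles with exceptional Schur multipliers (the small sporadic and low-rank cases) must be handled individually: for each I would compute the central extensions, check semiregularity, non-innate-transitivity, and the rank via the suborbit lengths, and collect the survivors into Table~\ref{tab:semi}, recording multiplicities where an abstract group admits several inequivalent such representations. The converse direction is a verification: for every configuration in~(i) and every row of Table~\ref{tab:semi} one checks directly that the three suborbit lengths $1,\,r-1,\,r(|\calP|-1)$ (respectively the tabulated lengths) are correct, that $N$ is the unique minimal normal subgroup and is intransitive, and that every other normal subgroup is transitive. The main obstacle I anticipate is the uniform control of the central extensions across all $2$-transitive socles: deciding precisely which cyclic central subgroups $N$ survive as semiregular with $X_\a$ transitive on $N\setminus\{1\}$ requires the delicate interplay between the Schur multiplier of $T$, the action of the parabolic $\Pa_1$ on it, and the ppd constraints, and it is here that the exceptional entries of Table~\ref{tab:semi} arise.
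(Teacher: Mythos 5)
There is a genuine gap, and it is fatal to the proposed strategy. Your reduction hinges on the claim that non-innate-transitivity ``rules out the affine $2$-transitive quotients and pins $\bar X$ to an almost simple group.'' This is false, and the counterexamples are precisely three of the five rows of Table~\ref{tab:semi}. For $X=\GL_2(3)=\Q_8{:}\S_3$ with $N=\bfZ(\Q_8)\cong\rmC_2$, the quotient $X/N\cong\S_4$ acts $2$-transitively on $4$ blocks and is affine (socle $\rmC_2^2$); for $X=2^{2+4}{:}\GammaL_1(2^4)$ with $N=2^2$, the quotient is $\AGammaL_1(16)$; for $X=2^4{:}\GL_3(2)$ with $N=\langle v\rangle$, the quotient is $\AGL_3(2)$. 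In each case $X$ is not innately transitive, because every minimal normal subgroup lies in the centre of a regular normal $p$-group and is intransitive (but semiregular), yet $X/N$ is affine. When $X/N$ is affine $2$-transitive, the preimage of its socle is a \emph{regular normal special $p$-group} $P\unlhd X$, so there is no perfect central extension and no Schur multiplier to analyse; your scheme would either wrongly conclude such $X$ do not exist or simply never produce them, and your later assertion that the exceptional entries of Table~\ref{tab:semi} come from ``$2$-transitive socles with exceptional Schur multipliers'' confirms the approach cannot recover rows~3--5.

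Handling this affine branch is in fact the bulk of the paper's work, and none of it appears in your proposal. One must identify $\Omega$ with $P$ and classify pairs $(P,G)$ with $G\leqs\Aut(P)$ having exactly three orbits $\{1\}$, $\bfZ(P)\setminus\{1\}$, $P\setminus\bfZ(P)$: the abelian case is Lemma~\ref{lem:semiaffine} (using~\cite{li2024Finitea}); the non-abelian solvable case is Lemma~\ref{lem:semi-nonsol} (using Dornhoff's and Huppert's classifications plus machine computation); and the non-abelian non-solvable case is eliminated by Lemmas~\ref{lem:semi_sl3} and~\ref{lem:semi_extra}, which rely on the classification of groups with three automorphism orbits~\cite{glasby2024Classifying,li2025finite}, Hering's theorem, Guralnick's theorem on subgroups of prime-power index, and first-cohomology arguments ($\H^1(G,K)=1$) to conjugate $G$ into a complement and compare centralizer indices. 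By contrast, the almost simple branch you do describe --- non-split central extensions of $2$-transitive socles, the $\mathrm{ppd}$ condition, $X/\bfC_X(N)\cong\rmC_{r-1}$, and the $\PSigmaL_2(q)$ exclusion --- is essentially the content of~\cite[Theorem 1]{huang2025finite}, which the paper cites and reformulates in Proposition~\ref{prop:psl} rather than reproves. So your outline re-derives (with the above error) the part that was already known, and omits the part that constitutes this paper's actual contribution.
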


\begin{table}[ht]
    \centering
    \caption{Exceptional semiprimitive permutation groups of rank $3$}\label{tab:semi}
    \begin{tabular}{llllcc}
    \hline
        $X$ & $X_\alpha$& $|\Omega|$ & $N$ & \#reps & Ref.\\
        \hline
        $3.\S_6$    & $\S_5$    & $18$  & $3$ & $2$ & \ref{ex:AS} \\
        $2.\M_{12}$ & $\M_{11}$ & $24$  & $2$ & $2$ & \ref{ex:AS} \\
        $\GL_2(3)$  & $\S_3$    & $8$   & $2$ & $2$ & \ref{ex:Q8} \\
        $2^{2+4}{:}\GammaL_1(2^4)$ & $\GammaL_1(2^4)$ & $2^6$ & $2^2$ &$1$ & \ref{ex:Stab-SU3(4)} \\
        $2^4{:}\GL_3(2)$ & $\GL_3(2)$ & $2^4$ & $2$ & $1$ & \ref{exam:two} \\
        \hline  	
    \end{tabular}
\end{table}

We sketch the proof of Theorem~\ref{thm:semiprim} below.
In~\cite[Theorem 1]{huang2025finite}, the classification problem of semiprimitive rank $3$ groups is reduced to the problem of determining certain subgroups of holomorphs of some special $p$-groups $P$.
When $P$ is elementary abelian, we will show in Lemma~\ref{lem:semiaffine} that such a semiprimitive rank $3$ group is exactly the group in the last row of Table~\ref{tab:semi}.
If $P$ is non-abelian, then $\Aut(P)$ has exactly $3$ orbits acting on $P$, which have been classified in~\cite{glasby2024Classifying,li2025finite}.
We will analyse all possible $P$ and $G\leqs\Aut(P)$ in Section~\ref{sec:semi}, and prove that such groups lie in rows~3 and~4 of Table~\ref{tab:semi}.

The layout of the paper is as follows.
In Section~\ref{sec:examples}, we give a brief summary for all finite semiprimitive rank $3$ groups which are not innately transitive.
Theorem~\ref{thm:semiprim} will be proved in Section~\ref{sec:semi}.

\subsection*{Notations}
We denote the cyclic group of order $n$ by $\rmC_n$, or simply by $n$ if there is no ambiguity.
For a group $G$, we write $G_p$ for a Sylow $p$-subgroup of $G$, and $\bfZ(G)$ for the center of $G$.
We also use $G^{(\infty)}$ for the unique perfect group appearing in the derived series of $G$.
For a prime $p$, we use $O_p(G)$ to denote the largest normal $p$-subgroup of $G$, and we use $p^{m+n}$ to denote a non-abelian special $p$-group $N$ such that $|\bfZ(N)| = p^m$ and $N/\bfZ(N) \cong p^n$.
Suppose $G$ acts on a set $\Omega$ and $\Delta\subseteq \Omega$. 
Then we write $G_{\Delta}$ for the setwise stabilizer of $\Delta$ in $G$.
The induced permutation group of $G$ on $\Omega$ is denoted by $G^\Omega$.

\subsection*{Acknowledgments}
The authors are grateful to the anonymous referees for their valuable comments and suggestions that have helped to improve the paper.

\section{Explicit constructions for semiprimitive rank $3$ groups}\label{sec:examples}

In this section, we introduce each semiprimitive group of rank $3$ listed in Theorem~\ref{thm:semiprim}.

Note that $X=\GL_n(3)$ acts transitively on non-zero vectors of $V=\bbF_3^n$, and the stabilizer $X_{v}$ with $v\in V\setminus\{0\}$ has three orbits:
$\{v\}$, $\{2v\}$ and $V\setminus\{0,v,2v\}$.
This yields the following simple example.

\begin{example}\label{ex:Linear-1}
    Let $X=\GL_n(3)$, and let $\Omega$ be the set of non-zero vectors of $\bbF_3^n$.
    Then the natural permutation representation of $X$ on $\Omega$ is semiprimitive of rank $3$.
    \qed
\end{example}

Groups in part~(i) of Theorem~\ref{thm:semiprim} are generalizations of Example~\ref{ex:Linear-1}, which are first introduced in Section~4 of~\cite{huang2025finite}.
These groups come from $r$-fold coverings of $\PGammaL_{n}(q)$ with suitable relations between $r$, $n$ and $q$.

\begin{example}\label{exam:psl}
    Let $p$ be a prime, and let $r\in\mathrm{ppd}(p^{r-1}-1)$.
    Set $V=\bbF_q^d$ with $d\geqslant 2$ and $q=p^f$ such that $f$ is divisible by $r-1$.
    Assume further that $\GammaL_d(q)$ is non-solvable.
    For $C\cong\rmC_{(q-1)/r}<\bfZ(\GL(V))$, let $\Omega$ be the set of $C$-orbits of $V\setminus\{0\}$.
    Then it is proved by~\cite[Proposition 4.5]{huang2025finite} that $\GammaL_d(q)^\Omega\cong \GammaL_d(q)/C$ has rank $3$.
    In particular, for $v\in V\setminus\{0\}$, the $2$ orbits of the stabilizer of $\GammaL_d(q)^\Omega$ on $\Omega\setminus \{v^C\}$ (where $v^C$ is the $C$-orbit containing $v$) are
    \[\bigl\{(\lambda^{t} v)^C:1\leqs t\leqs r-1\bigr\}\mbox{ and }\bigl\{w^C:w\notin \{\lambda^k v:k\in\bbZ\}\bigr\},\]
    where $\lambda$ is a primitive element of $\bbF_q^\times$.
    Moreover, the non-trivial intransitive normal subgroup of $\GammaL_d(q)^\Omega$ is $\bfZ(\GL_d(q))^\Omega$, which is semiregular on $\Omega$.
    This implies that $\GammaL_d(q)^\Omega$ is a semiprimitive group of rank $3$.
    \qed
\end{example}

Using the definitions in Example~\ref{exam:psl}, let $G$ be a group such that $\SL_d(q)\unlhd G\leqs\GammaL_d(q)$.
In~\cite[Proposition 4.11]{huang2025finite}, the conditions under which $G^\Omega$ is a semiprimitive group (but not innately transitive) of rank $3$ are given.
We reformulate them in the following proposition, as in part~(i) of Theorem~\ref{thm:semiprim}.

\begin{proposition}\label{prop:psl}
    Using definitions above, let $N=\bfZ(\GL_d(q))/ C\cong\rmC_r$ and let $X=G^\Omega$.
    The following statements hold.
    \begin{enumerate}
        \item If $X$ is not innately transitive, then $N\lhd X^{(\infty)}\cong r.\PSL_d(q)$.
        \item Assume that $N\lhd X$. 
        Then $X$ is semiprimitive of rank $3$ if and only if $X/\bfC_X(N)\cong\rmC_{r-1}$ and $G\not\leqslant\SigmaL_2(q)$ when $(d,r)=(2,2)$.
    \end{enumerate}
\end{proposition}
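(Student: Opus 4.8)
The plan is to analyze the action of $X_\alpha$ on the two nontrivial suborbits of $\GammaL_d(q)^\Omega$ exhibited in Example~\ref{exam:psl}, namely $\Delta_1=\{(\lambda^t v)^C:1\leqs t\leqs r-1\}$ and $\Delta_2=\{w^C:w\notin\langle v\rangle\}$, and to show that $X=G^\Omega$ has rank $3$ precisely when $X_\alpha$ stays transitive on each of them. The starting observation is that $N=\bfZ(\GL_d(q))^\Omega\cong\rmC_r$ is semiregular (Example~\ref{exam:psl}) with $\alpha^N=\{\alpha\}\cup\Delta_1$; thus $\Delta_1$ is exactly the nontrivial part of the $N$-orbit of $\alpha$, while $\Delta_2$ is the remaining suborbit. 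Since $\SL_d(q)^\Omega$ is transitive on $\Omega$ and centralizes $N$, I have $\SL_d(q)^\Omega\leqs\bfC_X(N)$, so $\bfC_X(N)$ is transitive and $X=X_\alpha\bfC_X(N)$; consequently $X/\bfC_X(N)\cong X_\alpha/\bfC_{X_\alpha}(N)$.

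For part~(i), I would identify $X^{(\infty)}$ with $\SL_d(q)^\Omega=\SL_d(q)/(\SL_d(q)\cap C)$ (using that $\SL_d(q)$ is perfect and $G/\SL_d(q)$ is solvable in the non-solvable case), a central quotient of $\SL_d(q)$ with simple quotient $\PSL_d(q)$ and central kernel of order $r$, that is $r.\PSL_d(q)$. The key point is that $N$ lies in this central $\rmC_r$ exactly when the extension is non-split, and non-innate-transitivity forces non-splitness: a split extension would exhibit $\PSL_d(q)$ as a transitive minimal normal subgroup, making $X$ innately transitive. Hence $N\lhd X^{(\infty)}\cong r.\PSL_d(q)$.

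For part~(ii) I would treat the two suborbits separately. The action of $X_\alpha$ on the block $\alpha^N$, identified with $N$ via $n\mapsto\alpha^n$, is by conjugation, giving a homomorphism $X_\alpha\to\Aut(N)\cong\rmC_{r-1}$ (recall that $r$ is prime) whose image is $X_\alpha/\bfC_{X_\alpha}(N)\cong X/\bfC_X(N)$. Because $\Aut(\rmC_r)$ acts regularly on $N\setminus\{1\}=\Delta_1$, transitivity of $X_\alpha$ on $\Delta_1$ is equivalent to this image being all of $\rmC_{r-1}$, i.e. to $X/\bfC_X(N)\cong\rmC_{r-1}$. For $\Delta_2$, when $d\geqs3$ the stabilizer $\SL_d(q)_v$ is already transitive on $V\setminus\langle v\rangle$, so $X_\alpha$ is automatically transitive on $\Delta_2$; the same holds when $d=2$ and $r\geqs3$, since the requirement $X/\bfC_X(N)\cong\rmC_{r-1}$ then forces $G$ to induce diagonal automorphisms, which one checks suffice to keep $\Delta_2$ a single $X_\alpha$-orbit. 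The genuinely delicate case is $(d,r)=(2,2)$: here $\Delta_1$ is a single point and the $\rmC_{r-1}$ condition is vacuous, so rank $3$ hinges entirely on $X_\alpha$ being transitive on $\Delta_2$, and I would show this holds if and only if $G\not\leqs\SigmaL_2(q)$, a splitting of $\Delta_2$ into two suborbits occurring exactly when only field automorphisms are available. This case-specific computation of $\SL_2(q)$-orbits on $C$-orbits off the line is the main obstacle, and is where I would lean on the explicit analysis of~\cite[Proposition 4.11]{huang2025finite}.

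Finally, once rank $3$ is established, semiprimitivity is essentially automatic from the subnormal structure. Here $N\cong\rmC_r$ is semiregular and $X/N$ is almost simple with socle $\PSL_d(q)$, so any nontrivial normal subgroup $M$ of $X$ satisfies one of: $M=N$ (semiregular); $M$ contains $N.\PSL_d(q)$, hence $\SL_d(q)^\Omega$, and is therefore transitive; or $M\cap N=1$ (as $r$ is prime), in which case $M$ centralizes $N$ and projects to a normal subgroup of $X/N$ containing $\PSL_d(q)$, so that $M$ is a transitive normal subgroup. In every case $M$ is transitive or semiregular, so $X$ is semiprimitive, completing the equivalence.
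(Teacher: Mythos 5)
Your proposal is correct in its overall structure and is genuinely more self-contained than the paper's proof of part (ii). Part (i) is essentially the paper's argument: $X^{(\infty)}=\SL_d(q)C/C$, and if $N\not\leqs X^{(\infty)}$ then $X^{(\infty)}\cong\PSL_d(q)$ is a simple, transitive, minimal normal subgroup, so $X$ is innately transitive. For part (ii) the paper works with the block $B=\alpha^N$: rank $3$ forces the block quotient $X_B/K$ to be $2$-transitive by Higman~\cite{higman1964Finite}, hence isomorphic to $\AGL_1(r)$, which gives $X/\bfC_X(N)\cong\rmC_{r-1}$; the converse direction (transitivity of $X_\alpha$ on $\Omega\setminus B$), the semiprimitivity claim, and the $(d,r)=(2,2)$ case are all delegated to \cite[Lemma~4.8, Lemma~4.7, Proposition~4.10]{huang2025finite}. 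You replace Higman and the first two citations by direct arguments: transitivity on $\Delta_1$ is equivalent to surjectivity of $X_\alpha\to\Aut(N)\cong\rmC_{r-1}$ (using $X=X_\alpha\bfC_X(N)$ and regularity of $\Aut(\rmC_r)$ on generators), transitivity on $\Delta_2$ for $d\geqs 3$ follows from transitivity of $\SL_d(q)_v$ on $V\setminus\langle v\rangle$, and semiprimitivity from the normal subgroup structure. This trades brevity for independence from the auxiliary lemmas of~\cite{huang2025finite}, which is a real gain. One inference in your semiprimitivity argument needs a line of justification: in the case $M\cap N=1$ with $MN/N\supseteq\PSL_d(q)$, transitivity of $M$ follows because $MN\supseteq NX^{(\infty)}$ and $MN/M$ is abelian, so $M\supseteq (MN)^{(\infty)}\supseteq X^{(\infty)}$, which is transitive.

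The one step that fails as written is the sub-case $d=2$, $r\geqs 3$. You claim that the hypothesis $X/\bfC_X(N)\cong\rmC_{r-1}$ ``forces $G$ to induce diagonal automorphisms'' which then fuse $\Delta_2$ into one orbit. But diagonal automorphisms centralize $N$: every element of $\GL_2(q)$ commutes with the scalar group, so $\bfC_X(N)$ contains the image of $G\cap\GL_2(q)$, and $X/\bfC_X(N)$ is a quotient of the \emph{field-automorphism} part of $G$ (indeed $X/\bfC_X(N)\cong\langle p^k \bmod r\rangle$ when $\langle\phi^k\rangle$ is the induced field group). So the $\rmC_{r-1}$ condition says nothing about diagonal elements, and the elements it does provide cannot by themselves give transitivity on $\Delta_2$: the induced action on the set of $r$ blocks $\bigl\{(\langle v\rangle+\lambda^jC w)\setminus\langle v\rangle\bigr\}/C$ embeds in $\AGL_1(r)$, and transitivity requires the full translation group $\rmC_r$, which elements of $\SL_2(q)$ and field automorphisms do not supply. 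The correct repair uses the standing hypothesis $N\lhd X$ instead: it forces $G$ to contain scalar matrices $\mu$ whose images cover $\bbF_q^\times/C\cong\rmC_r$, and then $\mu\cdot\mathrm{diag}(\mu^{-1},\mu)=\mathrm{diag}(1,\mu^2)\in G$ stabilizes $v^C$ and translates the block set by $\mu^2C$; since $r$ is odd, squares cover $\rmC_r$, so $X_\alpha$ is transitive on $\Delta_2$ unconditionally in this case. With that substitution (and the $(d,r)=(2,2)$ case still quoted from~\cite{huang2025finite}, exactly as the paper does), your proof goes through.
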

\begin{proof}
    Since $G^{(\infty)}=\SL_d(q)$, we have that $X^{(\infty)}=\SL_d(q)/(C\cap \SL_d(q))$.
    Recall that $N=\bfZ(\GL_d(q))/C\cong\rmC_r$.
    Hence, either $N=\bfZ(X^{(\infty)})$ or $X^{(\infty)}\cong\PSL_d(q)$.
    Suppose that $N\not\leqslant X^{(\infty)}$.
    Then $X^{(\infty)}$ is simple, and thus it is a minimal normal subgroup of $X$.
    Recall that $X^{(\infty)}=\left(G^{(\infty)}\right)^\Omega$.
    It follows that $X$ is innately transitive as $G^{(\infty)}\cong\SL_d(q)$ is transitive on $\Omega$.
    Hence, part~(i) holds.

    By~\cite[Lemma 4.7]{huang2025finite}, if $X$ has rank $3$, then $X$ is semiprimitive.
    In addition, if $(d,r)=(2,2)$ then $X$ has rank $3$ if and only if $G\not\leqslant\SigmaL_2(q)$ by \cite[Proposition 4.10]{huang2025finite}.
    Thus, we only need to show that if $(d,r)\neq (2,2)$ then $X$ has rank $3$ if and only if $X/\bfC_X(N)\cong\rmC_{r-1}$.
    
    Note that $N$ is an intransitive normal subgroup of $X$.
    Then an orbit $B$ of $N$ forms a block of $X$ acting on $\Omega$.
    Let $K$ be the kernel of $X_B$ acting on $B$.
    On the one hand, assume that $X$ has rank $3$.
    By \cite[Page 147]{higman1964Finite}, $X_B/K$ is a $2$-transitive group on $B$.
    Hence, $X_B/K\cong \AGL_1(r)$ as $N$ acts regularly on $B$.
    It follows that $X/\bfC_X(N)\cong\Aut(N)\cong \rmC_{r-1}$.
    On the other hand, assume that $X/\bfC_X(N)\cong\rmC_{r-1}$.
    Note that $\bfC_X(N)$ is transitive on $\Omega$ as it contains $(\SL_d(q))^\Omega$.
    Then
    \[\rmC_{r-1}\cong X/\bfC_X(N)=(X_B\bfC_X(N))/\bfC_X(N)\cong X_B/(\bfC_X(N)\cap X_B).\]
    Recall that $X_B/K\lesssim \AGL_1(r)$ and $K\leqslant \bfC_X(N)$.
    This yields that $X_B/K\cong \AGL_1(r)$ is $2$-transitive on $B$.
    Let $\alpha\in B$.
    Then $X_\alpha$ has orbits $\{\alpha\}$ and $B\setminus\{\alpha\}$ on $B$.
    By~\cite[Lemma 4.8]{huang2025finite}, we have that $X_\alpha$ is transitive on $\Omega\setminus B$.
    Thus, $X$ has rank $3$, and therefore the proof is complete.
\end{proof}

Two additional examples arise from certain coverings of almost simple $2$-transitive permutation groups.
These groups are given in~\cite[Theorem 1(a)]{huang2025finite}, which lie in rows~1 and~2 of Table~\ref{tab:semi}.

\begin{example}\label{ex:AS}
	Recall that $\S_6$ has two non-conjugate subgroups which are isomorphic to $\S_5$, one transitive of degree $6$, and the other not; and $\M_{12}$ has two non-conjugate subgroups which are isomorphic to $\M_{11}$, one transitive of degree $12$, and the other not.
	\begin{enumerate}
		\item Let $X=3.\S_6$, a non-split extension.
		Then $X$ has two non-conjugate subgroups which are isomorphic to $\S_5$, and hence $X$ has $2$ inequivalent permutation representations of degree $18$.
		Both of them are semiprimitive of rank $3$.
		\item Let $X=2.\M_{12}$, the Schur extension.
		Then $X$ has two non-conjugate subgroups which are isomorphic to $\M_{11}$ and give rise to $2$ inequivalent permutation representations of degree $24$.
		Both of them are semiprimitive of rank $3$.
		\qed
	\end{enumerate}
\end{example}

The other groups in the last three rows of Table~\ref{tab:semi} come from holomorphs of certain groups.
We remark that $\GL_2(3)$ (row~3 of Table~\ref{tab:semi}) is a subgroup of $\Hol(\Q_8)$.
\begin{example}\label{ex:Q8}
    The quaternion group $\Q_8$ contains exactly $1$ involution and $6$ elements of order $4$.
    Then $\Aut(\Q_8)\cong \S_4$ has a subgroup isomorphic to $\S_3$ which partitions the $8$ elements of $\Q_8$ into $3$ orbits:
    \begin{enumerate}
        \item the single set containing the identity,
        \item the single set containing the unique involution, and 
        \item the set of the $6$ elements of order $4$.
    \end{enumerate}
    Thus, the group $X:=\Q_8{:}\S_3<\Hol(\Q_8)$ has a transitive permutation representation of degree $8$, which has stabilizer $X_\alpha=\l a\r{:}\l b\r=\S_3$.
    Let $c$ be the unique involution of $\Q_8$.
    Then $H=\l a\r{:}\l bc\r\cong \S_3$, which is not conjugate to $X_\alpha$.
    Hence, $X$ has another permutation representation of degree $8$ with stabilizer $H$.
    We remark that both of these two permutation representations are semiprimitive with the unique non-trivial intransitive normal subgroup $N=\bfZ(\Q_8)$, and $X=\Q_8{:}\S_3\cong\GL_2(3)$.
    \qed
\end{example}

The following group arises from $\PGammaU_{3}(2^2)$, which lies in row~4 of Table~\ref{tab:semi}.
\begin{example}\label{ex:Stab-SU3(4)}
    Let $X$ be the stabilizer of $\PGammaU_{3}(2^2)$ acting on $1$-spaces of $\bbF_{2^4}^3$.
    Then
    \[X\cong 2^{2+4}{:}\GammaL_1(2^4)=2^{2+4}{:}(15{:}4)\]
    has a unique minimal normal subgroup $N=2^2$.
    Write $\GammaL_1(2^4)=\l a\r{:}\l\phi\r$.
    Then $\bfN_X(\l a\r)=\l a\r{:}\l\phi\r=\GammaL_1(2^4)$.
    It follows that $X$ has a unique transitive permutation representation with stabilizer isomorphic to $\GammaL_1(2^4)$.
    We remark that $\GammaL_1(2^4)$ acting on $2^{2+4}$ partitions the $2^6$ elements into $3$ orbits:
    \[\{1\},\ N\setminus\{1\}\mbox{ and }2^{2+4}\setminus N.\]
    Thus, this permutation representation is semiprimitive of rank $3$ with the unique non-trivial intransitive normal subgroup $N$.
    \qed
\end{example}

We remark that $\Q_8\cong \SU_3(2)_2$, and hence the semiprimitive groups given in Examples~\ref{ex:Q8} and~\ref{ex:Stab-SU3(4)} have a regular normal subgroup isomorphic to $\SU_3(2^k)_2$ for $k=1$ and $2$, respectively.
For a small group $N$, we can exhaust all subgroups of $\Aut(N)\lesssim \Hol(N)$ with exactly $3$ orbits on $N$ using~\Magma~\cite{bosma1997Magma}.
The calculations show that semiprimitive rank $3$ subgroups of $\Hol(\SU_3(2^k)_2)$ are isomorphic to the groups constructed in Examples~\ref{ex:Q8} and \ref{ex:Stab-SU3(4)}.

\begin{lemma}\label{lem:unique}
    Let $N_k=\SU_3(2^k)_2$ for $k=1$ and $2$.
    \begin{enumerate}
        \item $X\leqslant \Hol(N_1)$ is semiprimitive of rank $3$ if and only if $X$ is one of the groups in Example~$\ref{ex:Q8}$;
        \item $X\leqslant \Hol(N_2)$ is semiprimitive of rank $3$ if and only if $X$ is the group defined in Example~$\ref{ex:Stab-SU3(4)}$.
    \end{enumerate}
\end{lemma}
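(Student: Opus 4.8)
The two statements have the same shape, so I would treat them in parallel. The ``if'' directions are exactly the orbit computations already carried out in Examples~\ref{ex:Q8} and~\ref{ex:Stab-SU3(4)}, so the real work is the ``only if'' direction. As recorded in the remark preceding the lemma, such an $X$ has a regular normal subgroup isomorphic to $N_k$; taking this to be the base group of the holomorph, I would write $X=N_k{\rtimes}G$ with $G=X_\alpha=X\cap\Aut(N_k)$, where $\alpha$ is the identity of $N_k$. Since $X$ is transitive with stabiliser $G$, its rank is the number of $G$-orbits on $\Omega=N_k$; as $\Aut(N_k)$ has exactly the three orbits $\{1\}$, $\bfZ(N_k)\setminus\{1\}$ and $N_k\setminus\bfZ(N_k)$, the group $X$ has rank $3$ precisely when $G$ is transitive on each of $\bfZ(N_k)\setminus\{1\}$ and $N_k\setminus\bfZ(N_k)$. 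This reduces the lemma to a search through the subgroups $G\le\Aut(N_k)$ with this orbit property.

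To detect semiprimitivity I would note that for $M\unlhd X$ the point stabiliser $M_\alpha$ equals $M\cap G$, so $M$ is semiregular iff $M\cap G=1$ and transitive iff $MG=X$. A clean necessary condition then emerges: since $[N_k,N_k]\le\bfZ(N_k)$, for any inner automorphism $\theta\in G$ and any $n\in N_k$ one has $n\theta n^{-1}=\theta\,t$ with $t$ a translation by a central element, so $M_0:=\bfZ(N_k)\,(G\cap\mathrm{Inn}(N_k))$ is normal in $X$. It is intransitive (as $N_k$ is non-abelian, $\bfZ(N_k)\ne N_k$) and satisfies $M_0\cap G=G\cap\mathrm{Inn}(N_k)$. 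Hence if $G\cap\mathrm{Inn}(N_k)\ne1$ then $M_0$ is a non-semiregular intransitive normal subgroup and $X$ is not semiprimitive. Thus semiprimitivity forces $G\cap\mathrm{Inn}(N_k)=1$, and for the few surviving candidates I would confirm the converse by listing their remaining normal subgroups.

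For part~(i), $\Aut(\Q_8)\cong\S_4$ with $\mathrm{Inn}(\Q_8)$ the normal Klein four-group acting on the six order-$4$ elements by sign changes, while $\S_4/\mathrm{Inn}(\Q_8)\cong\S_3$ permutes the three pairs $\{\pm i\},\{\pm j\},\{\pm k\}$. A direct check shows that the subgroups transitive on these six elements are, up to conjugacy, $\S_3$, $\A_4$ and $\S_4$; the latter two contain $\mathrm{Inn}(\Q_8)$ and so are excluded by the criterion above. The remaining $G\cong\S_3$ has $G\cap\mathrm{Inn}(\Q_8)=1$ and gives $X=\Q_8{\rtimes}\S_3\cong\GL_2(3)$, whose normal subgroups $1,\bfZ(\Q_8),\Q_8,\SL_2(3),X$ are all transitive or semiregular; this is the group of Example~\ref{ex:Q8}, with the two representations recorded there.

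For part~(ii) I would combine the rank-$3$ orbit conditions on $N_2=2^{2+4}$ — transitivity on the three central involutions and on the $60$ elements of $N_2\setminus\bfZ(N_2)$, forcing $60\mid|G|$ — with the condition $G\cap\mathrm{Inn}(N_2)=1$, which embeds $G$ into $\Aut(N_2)/\mathrm{Inn}(N_2)$. A search over $\Aut(2^{2+4})$ subject to these constraints leaves only $G\cong\GammaL_1(2^4)=15{:}4$, acting regularly on $N_2\setminus\bfZ(N_2)$; the normaliser computation of Example~\ref{ex:Stab-SU3(4)} then determines $X$ up to conjugacy, giving precisely that group. The main obstacle is this last search: $\Aut(2^{2+4})$ is large, so the decisive step — that $\GammaL_1(2^4)$ is the unique surviving $3$-orbit subgroup — is exactly where the \Magma\ enumeration mentioned before the lemma does the work, the conceptual content being only to verify by hand that $\GammaL_1(2^4)$ passes both conditions and reconstructs the stated group.
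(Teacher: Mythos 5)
Your proposal is correct in substance, and it is worth noting that the paper offers no deductive proof of this lemma at all: its entire justification is the \Magma\ enumeration described in the remark preceding the statement (exhaust the subgroups of $\Aut(N_k)$ with exactly three orbits on $N_k$ and test semiprimitivity of $N_k{:}G$). Your reduction to stabilizers $G\leqslant\Aut(N_k)$ with orbits $\{1\}$, $\bfZ(N_k)\setminus\{1\}$ and $N_k\setminus\bfZ(N_k)$ is exactly the reduction implicit in that computation, so for part~(ii) your proof and the paper's coincide: both ultimately lean on the computer search. Where you genuinely diverge is part~(i): the criterion that semiprimitivity of $N_k{:}G$ forces $G\cap\mathrm{Inn}(N_k)=1$ is correct and is not in the paper. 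Indeed, conjugating $\theta\in\mathrm{Inn}(N_k)$ by a right translation multiplies it by a translation by an element of $[N_k,N_k]\leqslant\bfZ(N_k)$, and inner automorphisms fix $\bfZ(N_k)$ pointwise, so $M_0=\bfZ(N_k)(G\cap\mathrm{Inn}(N_k))$ is a subgroup normalized by both $N_k$ and $G$, is intransitive (its orbit through the identity is $\bfZ(N_k)$), and has non-trivial point stabilizer $G\cap\mathrm{Inn}(N_k)$. This disposes of $\A_4$ and $\S_4$ inside $\Aut(\Q_8)\cong\S_4$, and your identification of $\S_3$, $\A_4$, $\S_4$ as the only subgroups transitive on the six elements of order $4$ is also right, so part~(i) is settled by hand --- a genuine improvement over the paper's purely computational treatment.

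One step needs repair, however. You open the ``only if'' direction with: ``as recorded in the remark preceding the lemma, such an $X$ has a regular normal subgroup isomorphic to $N_k$.'' The remark records no such thing; it only says that the \emph{example} groups have a regular normal subgroup isomorphic to $N_k$. A transitive subgroup of a holomorph need not contain the base group of translations (e.g.\ $\rmC_4\leqslant\Hol(\rmC_2^2)\cong\S_4$ is transitive but does not contain the translation group $\rmC_2^2$), so writing $X=N_k{:}G$ with $G=X\cap\Aut(N_k)$ is an assumption, not a consequence of anything stated earlier. In mitigation, the paper's own computational proof makes the same tacit restriction, and in the only place the lemma is applied (Lemma~\ref{lem:semi-nonsol}) the group $X$ is given as $P{:}G$ with $P$ the translation copy, so nothing downstream breaks; but in a self-contained proof you should either impose $N_k\leqslant X$ as a hypothesis or add an argument (for instance via the normal subgroup $X\cap N_k$ and semiprimitivity) showing that the transitive rank~$3$ subgroups of $\Hol(N_k)$ in question do contain a regular normal copy of $N_k$.
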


The next example, lying in the last row of Table~\ref{tab:semi}, is constructed from the exceptional $2$-transitive action of $\A_8$ of degree $15$, with point stabilizer $2^3{:}\GL_3(2)$.
This construction was first given in~\cite[Example 6.3]{huang2025finite}, see also~\cite[Construction 2.1]{li2024Finitea}.
\begin{example}\label{exam:two}
    Let $H\cong \A_7$ be the subgroup of $\GL(V)\cong \A_8$ with $V=\bbF_2^4$.
    Then $H$ is $2$-transitive on non-zero vectors of $V$.
    Define $X=V{:}G$ with $G=H_v\cong\GL_3(2)$ for some non-zero vector $v\in V$.
\end{example}
\begin{lemma}\label{lem:semiaffine}
    Assume that $X$ is a semiprimitive rank $3$ group with an abelian regular normal subgroup.
    Then either $X$ is primitive, or $X$ is as given in Example~$\ref{exam:two}$.
\end{lemma}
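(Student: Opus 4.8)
The plan is to exploit the affine structure forced by the abelian regular normal subgroup, and then to invoke semiprimitivity twice: once to rule out ``transvection-like'' normal subgroups, and once to cut the module down to an elementary abelian one. Since $N$ is abelian, regular and normal while $X$ is non-regular, I would write $X=N{\rtimes}G$ with $G=X_\a\leqs\Aut(N)$ acting faithfully, and identify $\Omega$ with $N$ so that $\a$ corresponds to $0$; then $\bfC_X(N)=N$ and rank $3$ says $G$ has exactly two orbits $O_1,O_2$ on $N\setminus\{0\}$. Assuming $X$ is not primitive, every non-trivial block system comes from a proper non-trivial $G$-invariant subgroup of $N$, and such a subgroup is a union of $G$-orbits through $0$; a short check that $\{0\}\cup O_2$ is never a subgroup shows that $0,M,N$ are the only $G$-invariant subgroups, where $M$ satisfies (say) $M\setminus\{0\}=O_1$ and $N\setminus M=O_2$. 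Thus $M$ is the unique non-trivial block and $G$ is transitive on both $M\setminus\{0\}$ and $N\setminus M$.

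Next I would reduce to $N$ elementary abelian. Transitivity on $M\setminus\{0\}$ forces a common order on its non-identity elements, hence a common prime order $p$, so $M\cong\bbF_p^m$; transitivity on $N\setminus M$ forces a common order there. A primary-decomposition argument (cross terms would otherwise put mixed-order elements into $N\setminus M$) shows $N$ is a $p$-group, and an element-order analysis then leaves only two possibilities: $N$ elementary abelian, or $N$ homocyclic of exponent $p^2$ with $M=pN$ its socle. In the homocyclic case, the kernel $K$ of $\Aut(N)$ acting on $N/pN$ fixes $M=pN$ pointwise, so the normal subgroup $M{\rtimes}(G\cap K)\lhd X$ has every orbit inside $M$ yet fixes $0$; semiprimitivity therefore forces $G\cap K=1$, i.e.\ $G\hookrightarrow\GL_m(p)$ through its action on $M$. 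I expect this case to be eliminated by combining $G\cap K=1$ with transitivity of $G$ on the order-$p^2$ vectors $N\setminus M$, a $p$-power index computation forcing $p^m\mid|G|\leqs|\GL_m(p)|$, with the residual small-rank possibilities excluded by hand (or by the reduction already available in~\cite{huang2025finite}). After this, $N=\bbF_p^d$ is elementary abelian.

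With $N=\bbF_p^d$ I would let $V=N$, so $G\leqs\GL_d(p)$ is reducible with invariant subspace $M$, transitive on $M\setminus\{0\}$ and on $V\setminus M$. Let $K$ now denote the subgroup of $G$ acting trivially on both $M$ and $V/M$. Each element of $K$ fixes $M$ pointwise and stabilises every coset of $M$, so $M{\rtimes}K\lhd X$ again has all orbits inside $M$ while fixing $0$; semiprimitivity forces $K=1$. Hence $G$ is a subdirect product of its images $A\leqs\GL(M)$ and $B\leqs\GL(W)$, where $W:=V/M$, and each of $A,B$ is a transitive linear group (transitive on the non-zero vectors of $M$, respectively of $W$).

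Finally I would pin down Example~\ref{exam:two}. The $G$-equivariant surjection $V\setminus M\to W\setminus\{0\}$, together with transitivity of $G$ on both ends, shows that each fibre-stabiliser acts transitively, affinely, on its $p^m$-point fibre, with linear parts lying in $A$ and fixing $0$; this both makes the extension $0\to M\to V\to W\to 0$ suitably non-split and ties $|A|$ and $|B|$ to $p^m$ and $p^{d-m}-1$. The plan is then to run Hering's classification of transitive linear groups on $B\leqs\GL(W)$ (and on $A$ when $\dim M\geqs 2$), and, for each family, to test whether a second transitive module $M$ together with a compatible non-split extension carrying a \emph{single} $G$-orbit on $V\setminus M$ can coexist. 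I expect every family to collapse except the case $p=2$, $\dim M=1$, $W=\bbF_2^3$ and $B=\GL_3(2)$, namely the non-split $\A_7<\A_8=\GL_4(2)$ configuration, which gives exactly the group of Example~\ref{exam:two}. This concluding case-analysis, together with the residual homocyclic eliminations above, is where the real work lies; everything preceding it is structural.
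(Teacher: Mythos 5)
Your structural skeleton is essentially the one the paper uses: identify $\Omega$ with $N$, use semiprimitivity to kill a kernel, and conclude that the induced groups on the invariant subgroup and on the quotient are both transitive. Two remarks on that part. First, your phrasing that $M{\rtimes}(G\cap K)\lhd X$ ``has every orbit inside $M$ yet fixes $0$'' is garbled: the orbits of that normal subgroup are the cosets of $M$, and the correct contradiction is that it is intransitive but not semiregular (its stabilizer of the point $0$ is $G\cap K\neq 1$), which is what semiprimitivity forbids; this is exactly the paper's argument and the slip is fixable. Second, in the elementary abelian case you only kill the kernel acting trivially on both $M$ and $V/M$, so you end up with a subdirect product of $A\leqs\GL(M)$ and $B\leqs\GL(V/M)$. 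The paper instead takes $K$ to be the full kernel of $G$ on $V/W$ (its $W$ is your $M$); the same normality computation $[V,K]\leqs W$ shows $W{:}K\lhd X$, and semiprimitivity then gives that $G$ is \emph{faithful} on $V/W$ --- a strictly stronger conclusion, used later to force $\dim V/W=3$. You should work with the larger kernel.

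The genuine gap is your endgame. The classification step --- determining all reducible $G\leqs\GL(V)$, faithful in the above sense, that are transitive on both $M\setminus\{0\}$ and $V\setminus M$ --- is the entire mathematical content of the lemma, and your proposal leaves it as ``run Hering's classification \dots\ I expect every family to collapse except'' the $\A_7<\A_8$ configuration. That expectation is not a proof: testing each Hering family against a second transitive module and a non-split extension carrying a single orbit on $V\setminus M$ is a substantial case analysis, and it is precisely the subject of the companion paper \cite{li2024Finitea}. The paper's proof disposes of it by citation: \cite[Lemma 3.3]{li2024Finitea} gives $O_p(G)\leqs K=1$, \cite[Theorem 1.2]{li2024Finitea} then forces $V=\bbF_2^4$ and $G\cong\GL_3(2)$, faithfulness on $V/W$ gives $\dim V/W=3$, and \cite[Lemma 2.3]{li2024Finitea} identifies $X$ with Example~\ref{exam:two}. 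Your elimination of the homocyclic exponent-$p^2$ case has the same defect (it ends in ``excluded by hand''); the paper avoids that case altogether, since in context the abelian regular normal subgroup is elementary abelian by the reduction in \cite[Theorem 1]{huang2025finite}, so its proof starts from a vector space. If you replace your two sketched endgames by these citations (or genuinely carry out the Hering analysis), your argument becomes a correct proof along the same lines as the paper's.
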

\begin{proof}
    Assume that $X$ is defined as in Example~\ref{exam:two}.
    Note that $X\leqs\Hol(V)\cong \AGL_4(2)$ naturally acts on $V$.
    Since $\A_7$ is $2$-transitive, it follows that $G$ is transitive on $V\setminus\{0,v\}$.
    Hence, $X=V{:}G\cong 2^4{:}\GL_3(2)$ has the following three suborbits
    \[\{0\},\{v\},V\setminus\{0,v\}.\]
    Then the non-trivial normal subgroups of $X$ contained in $V$ are precisely $\langle v\rangle$ and $V$.
    Suppose that $N\not\leqslant V$ is a proper normal subgroup of $X$.
    Then $N\langle v\rangle/\langle v\rangle$ is a normal subgroup of $X/\langle v\rangle\cong\AGL_3(2)$, and hence $N\langle v\rangle=X$.
    This yields that $N\cap\langle v\rangle=1$ and $|X:N|=2$. 
    Since $N\cap V\lhd X$, we have that $N\cap V=1$, which contradicts $|X:N|=2$.
    We conclude that non-trivial proper normal subgroups of $X$ are precisely $\langle v\rangle$ and $V$.
    Hence, $X$ is semiprimitive as $\langle v\rangle$ is semiregular and $V$ is transitive.

    On the other hand, we assume that $X=V{:}G$ is semiprimitive of rank $3$ for some vector space $V$ and $G\leqs\GL(V)$.
    We may further assume that $X$ is imprimitive.
    Then $G$ is reducible on $V$, and $G$ stabilizes a proper non-zero subspace $W$ of $V$.
    Let $K$ be the kernel of $G$ acting on $V/W$.
    Then $W{:}K$ is a normal subgroup of $X$ since $[V,K]\leqs W$.
    Since $X$ is semiprimitive and $W{:}K$ is intransitive, we have that $K=1$.
    Note that $G$ has the following three orbits on $V$:
    \[\{0\}, W\setminus\{0\}\mbox{ and }V\setminus W.\]
    Hence, we have that the induced linear groups $G^W$ and $G^{V/W}$ are both transitive on the non-zero vectors.
    Then $O_p(G)\leqs K$ by~\cite[Lemma 3.3]{li2024Finitea}.
    Thus, it follows that $O_p(G)=1$.
    By \cite[Theorem 1.2]{li2024Finitea}, $V=\bbF_2^4$ and $G\cong\GL_3(2)$.
    Note that $G$ acts faithfully on $V/W$, it follows that $\dim V/W=3$.
    Then $X$ is as given in Example~\ref{exam:two} by~\cite[Lemma 2.3]{li2024Finitea}.
\end{proof}

\section{Semiprimitive rank $3$ groups with a regular normal subgroup}\label{sec:semi}

In this section, we assume that $X$ is a semiprimitive group on $\Omega$ of rank $3$ with a regular normal non-abelian special $p$-group $P$.
We may identify $\Omega$ with $P$, and then $P$ consists of right translations.
Let $G$ be the stabilizer in $X$ of the identity of $P$.
Then $G\leqs\Aut(P)$.

As $\bfZ(P)<P$ is a non-trivial characteristic subgroup of $P$, it is a union of some orbits of $G$ acting on $P$.
Hence, the $3$ orbits of $G$ on $P$ have to be 
\[\mbox{\{$1$\}, $\bfZ(P)\setminus \{1\}$\mbox{ and }$P\setminus\bfZ(P)$.}\]

We give some primary properties of $X$.

\begin{lemma}\label{lem:semi}
    Using notations and definitions above, the following statements hold.
    \begin{enumerate}
        \item $G$ acts faithfully on $P/\bfZ(P)$.
        \item The induced action of $G$ on non-identities of $\bfZ(P)$ (or $P/\bfZ(P)$) is transitive.
        \item $|G{:}\bfC_G(x)|=(|P/\bfZ(P)|-1)|\bfZ(P)|$ for $x\in P\setminus\bfZ(P)$.
        \item Denote by $V$ and $M$ the corresponding $\bbF_pG$-modules of $P/\bfZ(P)$ and $\bfZ(P)$, respectively.
        Then $M$ is isomorphic to a quotient of $\Lambda^2(V)$, the exterior square of $V$, as $\bbF_pG$-modules.
    \end{enumerate}
\end{lemma}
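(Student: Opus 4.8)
The plan is to handle the four statements in an order that lets the easy ones support the one genuinely substantive point: I would dispatch (ii) and (iii) directly from the orbit description, then prove (i) by a block‑system argument exploiting semiprimitivity, and finally obtain (iv) from the commutator form.

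First I would record (ii) and (iii), which are immediate from the fact (established just before the lemma) that the three $G$-orbits on $P$ are exactly $\{1\}$, $\bfZ(P)\setminus\{1\}$ and $P\setminus\bfZ(P)$. Transitivity of $G$ on $\bfZ(P)\setminus\{1\}$ is then a tautology, and applying the surjective $G$-equivariant map $P\to V=P/\bfZ(P)$ to the orbit $P\setminus\bfZ(P)$ shows $G$ is transitive on $V\setminus\{0\}$; this gives (ii). For (iii), the centralizer $\bfC_G(x)$ of $x\in P\setminus\bfZ(P)$ is just the point stabilizer $G_x$, so $|G:\bfC_G(x)|=|x^G|=|P\setminus\bfZ(P)|=|\bfZ(P)|\,(|V|-1)$, which is the asserted formula.

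For (i) I would bring in semiprimitivity through a block system. Since $M:=\bfZ(P)\lhd X$ is intransitive, its orbits are the cosets of $\bfZ(P)$ and form an $X$-invariant block system $\mathcal B$, naturally identified with $V$. Let $K\leqs G$ be the kernel of the $G$-action on $V$. I claim the kernel of the $X$-action on $\mathcal B$ is exactly $MK$: an element $pg$ with $p\in P$ and $g\in G$ fixes every block if and only if the map it induces on $V=P/\bfZ(P)$ is trivial, that is, if and only if $p\in M$ and $g\in K$. Hence $MK\lhd X$. Every orbit of $MK$ lies inside a single block, so $MK$ is intransitive (as $P$ is non-abelian, $V\neq 0$), while $K\leqs(MK)_\alpha$ shows that if $K\neq 1$ then $MK$ is not semiregular. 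Semiprimitivity forbids an intransitive, non-semiregular normal subgroup, so $K=1$, i.e.\ $G$ acts faithfully on $V$, proving (i).

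Finally I would prove (iv) by analysing the commutator form. Because $P$ is special, the map $\beta\colon V\times V\to M$, $\beta(\bar x,\bar y)=[x,y]$, is well defined; the standard commutator identities, using that $[x,y]\in\bfZ(P)$ is central, give bi-additivity, hence $\bbF_p$-bilinearity over the prime field, and $[x,x]=1$ makes $\beta$ alternating, so it factors through a linear map $\bar\beta\colon\Lambda^2 V\to M$. Equivariance $\beta(\bar x^g,\bar y^g)=[x,y]^g=\beta(\bar x,\bar y)^g$ makes $\bar\beta$ an $\bbF_pG$-homomorphism, and it is surjective because the image of $\bar\beta$ is the subgroup generated by all $[x,y]$, namely $[P,P]=\bfZ(P)=M$. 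Thus $M\cong\Lambda^2 V/\ker\bar\beta$ as $\bbF_pG$-modules. The main obstacle is the faithfulness claim (i): the crucial realisation is that the intransitive-versus-semiregular dichotomy must be applied not to the kernel $K$ itself (which fixes $\alpha$ but need not be normal in $X$) but to the genuinely normal subgroup $MK$, the kernel of the action on the $\bfZ(P)$-block system.
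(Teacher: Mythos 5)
Your proof is correct and follows essentially the same route as the paper: parts (ii) and (iii) are read off from the orbit structure exactly as in the paper's proof, and for part (i) the paper likewise takes the kernel $K$ of $G$ acting on $P/\bfZ(P)$ and applies semiprimitivity to the intransitive normal subgroup $\bfZ(P){:}K$ --- whose normality in $X$, which the paper merely asserts, you justify cleanly by identifying $\bfZ(P)K$ as the kernel of the $X$-action on the block system of $\bfZ(P)$-cosets. The only divergence is part (iv): the paper simply cites \cite[Lemma 2.8(iv)]{li2025finite}, whereas you reprove it directly via the standard argument that the commutator map of the special group $P$ induces a surjective $\bbF_pG$-homomorphism $\Lambda^2(V)\to M$ (well-defined and bilinear because $[P,P]=\bfZ(P)$ is central, alternating, $G$-equivariant, and surjective since $[P,P]=\bfZ(P)$); this is precisely the content of the cited result, so your write-up is self-contained where the paper outsources the argument.
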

\begin{proof}
    Let $K$ be the kernel acting on $P/\bfZ(P)$.
    Then $\langle \bfZ(P),K\rangle=\bfZ(P){:}K$ is a normal subgroup of $X$.
    Since $X$ is semiprimitive, the groups $\Phi(P)$ and $\Phi(P){:}K$ are semiregular normal subgroups of $X$.
    Therefore, we have that $K=1$, that is, $G$ acts faithfully on $P/\Phi(P)$, as in part~(i).

    Since $\bfZ(P)\setminus\{1\}$ is an orbit of $G$, it is clear that $G^{\bfZ(P)}$ is transitive on non-identities.
    Note that $P\setminus\bfZ(P)$ is also an orbit of $G$.
    It yields that $G^{P/\bfZ(P)}$ is also transitive on non-identities, as in part~(ii).

    Note that $\bfC_G(x)$ is the stabilizer of $G$ acting on $P\setminus\bfZ(P)$.
    Hence, we have that $|G{:}\bfC_G(x)|=|P\setminus\bfZ(P)|=(|P/\bfZ(P)|-1)|\bfZ(P)|$, as in part~(iii).

    Part~(iv) is given in~\cite[Lemma 2.8\,(iv)]{li2025finite}.
\end{proof}

First, we deal with the case that $X=P{:}G$ is solvable.
\begin{lemma}\label{lem:semi-nonsol}
    If $G$ is solvable, then $(X,G)\cong (\GL_2(3),\S_3)$ or $(2^{2+4}{:}\GammaL_1(2^4),\GammaL_1(2^4))$ as given in Examples~$\ref{ex:Q8}$ and~$\ref{ex:Stab-SU3(4)}$, respectively.
\end{lemma}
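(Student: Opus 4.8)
The strategy is to exploit the faithful transitive linear action of $G$ on $V=P/\bfZ(P)$ together with the module constraints of Lemma~\ref{lem:semi}, and then to match $P$ against the classification underlying Lemma~\ref{lem:unique}. By Lemma~\ref{lem:semi}(i)--(ii), $G$ acts faithfully on $V$ and transitively on $V\setminus\{0\}$, so $G\cong G^V$ is a solvable transitive linear group on $V=\bbF_p^n$. I would invoke Huppert's classification of such groups: either $G^V\leqs\GammaL_1(p^n)$, or $(p^n,G^V)$ is one of the finitely many exceptional configurations, all occurring in odd characteristic (with $p^n\in\{3^2,3^4,5^2,7^2,11^2,23^2\}$). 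Throughout, the decisive input is Lemma~\ref{lem:semi}(iii): $G$ is transitive on the third orbit $P\setminus\bfZ(P)$, of size $(p^n-1)p^m$ where $p^m=|\bfZ(P)|$, so that $(p^n-1)p^m\mid|G|$.

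I would then treat the generic case $G^V\leqs\GammaL_1(p^n)$. Transitivity on $V\setminus\{0\}$ forces the full Singer cycle $\bbF_{p^n}^\times\leqs G^V$, since the Galois part fixes $1$; hence $|G|=(p^n-1)e$ with $e=|G^V\cap\Gal(\bbF_{p^n}/\bbF_p)|$ dividing $n$. The order bound above then gives $p^m\mid e\mid n$, so $n\geqs p^m$. On the other hand, writing $M=\bfZ(P)$ and using Lemma~\ref{lem:semi}(iv), $M$ is a transitive quotient of $\Lambda^2 V$; decomposing $\Lambda^2 V$ under the Singer cycle shows that $M$ affords a character of the form $\chi^{1+p^k}$, and the transitivity requirements on $\bfZ(P)\setminus\{1\}$ and within each fibre of $P\setminus\bfZ(P)$ translate into sharp divisibility conditions relating $p^k+1$, $p^n-1$ and $p^m-1$. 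The heart of this step is to show, by an elementary analysis of base-$p$ digit patterns, that these force $n=2m$ and $k=m$ (the unitary commutator form). Since then $p^m\mid n=2m$ while $p^m>2m$ for every odd $p$, we must have $p=2$; and $2^m\mid 2m$ then forces $m\in\{1,2\}$, leaving $(n,m)\in\{(2,1),(4,2)\}$ and $|P|\in\{2^3,2^6\}$.

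To identify $P$ and conclude, I would use that transitivity of $G$ on $P\setminus\bfZ(P)$ makes all non-central elements share a common order: for $|P|=8$ this excludes $\mathrm{D}_8$ and forces $P\cong\Q_8=\SU_3(2)_2$, while for $|P|=2^6$ the data $(|\bfZ(P)|,|V|)=(4,16)$ together with the unitary form identify $P\cong 2^{2+4}=\SU_3(2^2)_2$, via the classification of~\cite{glasby2024Classifying,li2025finite}. As $X=P{:}G\leqs\Hol(P)$, Lemma~\ref{lem:unique} then pins $X$ down to Examples~\ref{ex:Q8} and~\ref{ex:Stab-SU3(4)}, yielding the asserted pairs $(\GL_2(3),\S_3)$ and $(2^{2+4}{:}\GammaL_1(2^4),\GammaL_1(2^4))$. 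The exceptional configurations from Huppert's theorem are finitely many and lie in odd characteristic; for each, the candidate $G^V$ is either too small to be transitive on $P\setminus\bfZ(P)$, or acts trivially on $\bfZ(P)$ and so fails transitivity there (for instance $\SL_2(3)$ on $\bbF_5^2$ or $\bbF_3^2$ centralizes the one-dimensional $M$), so none survives.

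I expect the main obstacle to be the number-theoretic reconciliation in the second paragraph: ruling out the odd-characteristic unitary-type $p$-groups, where $n=2m$ still holds but $p^m\nmid 2m$, and excluding the larger values of $n$ that the weaker divisibility bounds alone would permit. In particular, establishing that the affine action of the field-automorphism part $G_{\bar x}\cong\Gal$ on each central fibre is transitive precisely when $p=2$ and $m\leqs 2$ — rather than merely that the orders divide correctly — is the crux, and is where the interplay between the three transitive actions (on $V\setminus\{0\}$, on $\bfZ(P)\setminus\{1\}$, and on the fibres of $P\setminus\bfZ(P)$) must be used in full.
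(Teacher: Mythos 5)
Your proposal inverts the paper's logic: the paper begins with Dornhoff's classification of the possible groups $P$ (cited as \cite{dornhoff1970imprimitive} and \cite[Theorem 2.16]{li2025finite}), which restricts $P$ at once to four families ($3^{1+4}_+$, $P(\epsilon)\cong 2^{3+6}$, Suzuki $2$-groups $A_2(n,\theta)$, and Sylow $p$-subgroups of $\SU_3(p^n)$), and only then applies Huppert's theorem inside the $\SU_3$ case; you instead start from Huppert's classification of the solvable transitive group $G$ and try to reconstruct $P$. That inversion is legitimate in principle, but it places the whole burden on the step you explicitly leave undone: the ``base-$p$ digit analysis'' that is supposed to force $n=2m$ and $k=m$ (the unitary shape). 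That step is precisely the content of Dornhoff's theorem which the paper imports and you neither prove nor cite; you yourself call it ``the crux'' and ``the main obstacle'', so as written this is a plan, not a proof. Moreover, the tool you propose for it rests on a false claim: transitivity of $G^V\leqslant\GammaL_1(p^n)$ on $V\setminus\{0\}$ does \emph{not} force the Singer cycle into $G^V$. For instance $H=\langle \omega^2,\omega\sigma\rangle\leqslant\GammaL_1(25)$, with $\omega$ a generator of $\bbF_{25}^\times$ and $\sigma$ the Frobenius, is transitive on the $24$ nonzero elements but meets the Singer cycle in a subgroup of index $2$. The divisibility consequence $p^m\mid e\mid n$ survives by orbit--stabilizer (and it does reproduce the paper's computation $p^m\mid 2m$, giving $2^3$ and $2^6$, \emph{once} $n=2m$ is known), but your planned decomposition of $\Lambda^2(V)$ into Singer weight spaces does not survive: when $G$ contains only part of the Singer cycle, $G$-submodules of $\Lambda^2(V)$ need not be sums of Singer character spaces, so the digit analysis is harder than your sketch assumes.

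The dismissal of Huppert's exceptional cases also has a concrete hole at $p^n=3^2$. There the surviving candidate is not $\SL_2(3)$ but $G=\GL_2(3)$, which acts on the one-dimensional module $M=\Lambda^2(V)$ by determinant, hence transitively on $M\setminus\{0\}$, and $|P\setminus\bfZ(P)|=24$ divides $|\GL_2(3)|=48$; so neither horn of your dichotomy (``too small'' or ``acts trivially on $\bfZ(P)$'') applies. Here $P$ is extraspecial of order $27$: the exponent-$9$ group is killed by your element-order observation, but $P=3^{1+2}_+$ is exactly $\PSU_3(3)_3$, one of the three candidates the paper can only eliminate by a \Magma\ search of $\Hol(\PSU_3(3)_3)$. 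To handle it by hand you must show that a faithful complement $\GL_2(3)\leqslant\Aut(3^{1+2}_+)\cong 3^2{:}\GL_2(3)$ has orbits of lengths $1,2,8,16$ on $P$ rather than $1,2,24$, which requires an explicit cocycle computation (the coset stabilizer acts on each coset $x\bfZ(P)$ with a fixed point) that your sketch does not contain; a similar check is needed at $p^n=3^4$, where the candidate is the paper's case (a), $3^{1+4}_+$, again settled computationally (though there a $3$-part order count does suffice). In short, your skeleton (Huppert $+$ divisibility $+$ Lemma~\ref{lem:unique}) could be completed into a genuinely different proof, but the two places where all the substance lies --- the unitary-shape claim and the odd-characteristic exceptional cases --- are exactly the places left open.
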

\begin{proof}
    By~\cite{dornhoff1970imprimitive} (also see~\cite[Theorem 2.16]{li2025finite}), if $G$ is solvable with $3$ orbits on $P$, then $P$ is one of the following groups:
    \begin{enumerate}
        \item[(a)] the extraspecial $3$-group of plus type: $3_{+}^{1+4}$;
        \item[(b)] $P(\epsilon)\cong 2^{3+6}$, see~\cite[Definition 4.5]{li2024Proof};
        \item[(c)] Suzuki $2$-group of type A: $A_2(n,\theta)$, see~\cite[Definition 4.1]{li2024Proof};
        \item[(d)] Sylow $p$-subgroup of $\SU_3(p^n)$ for some prime $p$.
    \end{enumerate}
    After exhaustively examining all rank $3$ subgroups of  $\Hol(3_{+}^{1+4})$ and $\Hol(P(\epsilon))$ using \Magma, we conclude that neither group contains semiprimitive rank $3$ subgroups.
    Hence, cases (a) and (b) are excluded.

    Assume that $P$ is the group in case~(c).
    Then $|\bfZ(P)|=|P/\bfZ(P)|=2^n$, see~\cite[page 8]{li2024Proof}.
    By Lemma~\ref{lem:semi}\,(ii) and (iii), we have that $G\lesssim\GL(P/\bfZ(P))\cong \GL_n(2)$ is transitive on non-identities of $P/\bfZ(P)\cong \rmC_2^n$ with order divisible by $2^n(2^n-1)$.
    Then $G$ is non-solvable by~\cite[Corollary 2.12]{huang2025finite}, which is a contradiction.
    Thus, case~(c) is excluded.

    Now, we assume that $P$ is a Sylow $p$-subgroup of $\SU_3(p^n)$ for some prime $p$.
    Remark that $|\bfZ(P)|=p^{n}$ and $|P/\bfZ(P)|=p^{2n}$.
    By Lemma~\ref{lem:semi}, we have that $G\lesssim\GL(P/\bfZ(P))\cong \GL_{2n}(p)$ is transitive on non-identities of $P/\bfZ(P)\cong\rmC_p^{2n}$ with order divisible by $p^{n}(p^{2n}-1)$.
    With the classification of finite solvable $2$-transitive groups~\cite{huppert1957Zweifach} (also see~\cite[Theorem 7.3]{huppert1982Finite}), either $G\lesssim\GammaL_1(p^{2n})$ or $p^{2n}\in\{3^2,5^2,7^2,11^2,23^2,3^4\}$.
    On the one hand, if $G\lesssim\GammaL_1(p^{2n})$, then $|\GammaL_1(p^{2n})|=2n(p^{2n}-1)$ is divisible by $p^n$.
    This yields that $p^{2n}=2^2$ or $2^4$.
    On the other hand, assume that $G\not\lesssim\GammaL_1(p^{2n})$.
    We remark that one can exhaust all primitive groups in small degrees by using the \textbf{PrimitiveGroups} command in \Magma.
    By calculations, we have that if $p^{2n}{:}G$ is a solvable $2$-transitive group and $|G|$ is divisible by $p^n(p^{2n}-1)$, then $p^{2n}=3^2$.
    Thus, we conclude that $P\cong \PSU_3(2)_2$, $\PSU_3(4)_2$, or $\PSU_3(3)_3$.
    With the aid of~\Magma, we obtain that $\Hol(\PSU_3(3)_3)$ has no semiprimitive rank $3$ subgroups.
    Therefore, $X$ is one of the groups in Examples~\ref{ex:Q8} and~\ref{ex:Stab-SU3(4)} by Lemma~\ref{lem:unique}.
\end{proof}

Finally, we will show that $G$ cannot be non-solvable.
For $q=p^n$ with odd prime $p$, the \textit{extraspecial $q$-group} is defined by
\[q^{1+2m}=q^{1+2}_+\circ\cdots\circ q^{1+2}_+=(q^{1+2}_+)^m/C,\]
where $q^{1+2}_+=\SL_3(q)_p$ and $C$ is generated by elements $(X_1,...,X_m)\in\bfZ(q^{1+2}_+)^m$ such that $\prod_{i=1}^m X_i=1$, refer to~\cite[Definition 2.17]{li2025finite}.
We remark that $q^{1+2m}_+$ is a special $p$-group, and hence $q^{1+2m}_+/\bfZ(q^{1+2m}_+)$ is an elementary abelian $p$-group of order $q^{2m}$.
Then $\Hol(q^{1+2m}_+)$ is a rank $3$ group by~\cite[Lemma~5.4]{li2025finite}.

Now we give the following reduction in the following lemma.

\begin{lemma}\label{lem:semi_sl3}
    Assume that $G$ is non-solvable.
    Then $P\cong q^{1+2m}_+/U$ for some $U< \bfZ(q^{1+2m}_+)$.
\end{lemma}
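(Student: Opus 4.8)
The plan is to reduce $P$ to the known classification and then discard every family except the symplectic one, using the hypothesis that $G$ is non-solvable. First I would observe that the three $G$-orbits on $P$ are $\{1\}$, $\bfZ(P)\setminus\{1\}$ and $P\setminus\bfZ(P)$. Since $\bfZ(P)$ is characteristic and $G\leqs\Aut(P)$, each $\Aut(P)$-orbit is a union of $G$-orbits, and $\Aut(P)$ cannot fuse $\bfZ(P)\setminus\{1\}$ with $P\setminus\bfZ(P)$ across the characteristic subgroup $\bfZ(P)$. Hence $\Aut(P)$ itself has exactly three orbits on $P$, so $P$ is one of the non-abelian special $p$-groups classified in~\cite{glasby2024Classifying,li2025finite}, and the task becomes to single out the family $q^{1+2m}_+/U$.

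Next I would record the linear data supplied by Lemma~\ref{lem:semi}. Writing $V=P/\bfZ(P)$ and $M=\bfZ(P)$, part~(i) gives that $G$ acts faithfully on $V$, part~(ii) that $G^V$ is transitive on $V\setminus\{0\}$, and parts~(ii),(iii) that $G$ is transitive on $M\setminus\{0\}$; moreover the commutator map of $P$ is a surjective $G$-equivariant alternating form $V\times V\to M$ which, by part~(iv), factors through $\Lambda^2 V$. In particular $G^V$ is a transitive linear group, so, $G$ being non-solvable, Hering's classification of transitive linear groups forces $G^V$ to contain one of $\SL_a(s)$, $\Sp_{2a}(s)$ or $\G_2(s)'$, apart from finitely many exceptional cases that I would settle directly against the exterior-square constraint (i.e.\ by checking whether $\Lambda^2 V$ admits a quotient $M$ on which the relevant group is transitive on nonzero vectors).

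The heart of the argument is then to match this against the classification list and retain only $q^{1+2m}_+/U$. For $p$ odd, the special $p$-groups carrying a $3$-orbit automorphism group are precisely the quotients $q^{1+2m}_+/U$: here the commutator form is, over a suitable field $\bbF_q$, an $\bbF_q$-symplectic form with no quadratic refinement, so $G^V$ may contain $\Sp_{2m}(q)$ and the conclusion holds. (The Sylow $p$-subgroups of $\SU_3(q)$ with $p$ odd have exponent $p$ and, carrying an $\bbF_q$-symplectic commutator form, are already isomorphic to $q^{1+2}_+$; they are not a separate case.) For $p=2$ the remaining families are the Suzuki $2$-groups of type $A$, the group $P(\epsilon)\cong 2^{3+6}$, and the Sylow $2$-subgroups $\SU_3(2^k)_2$. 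For each of these I would show that any automorphism group realising the three orbits is solvable, so that the non-solvability of $G$ excludes them. Therefore $p$ is odd and $P\cong q^{1+2m}_+/U$ for some $U<\bfZ(q^{1+2m}_+)$.

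The main obstacle is the characteristic-$2$ elimination in the last step. In characteristic $2$ the squaring map on $P$ descends to a nonzero $G$-equivariant quadratic map $V\to M$ refining the commutator form, so $G^V$ preserves not merely a symplectic form but a Hermitian- or anisotropic-type structure; this is exactly what separates $\SU_3(2^k)_2$ from $q^{1+2}_+$ (for instance $\Q_8$ from $\D_8$). The delicate point is to verify, for each of the Suzuki-type, $2^{3+6}$ and $\SU_3(2^k)_2$ families, that the subgroup of $\Aut(P)$ transitive on both $V\setminus\{0\}$ and $M\setminus\{0\}$ is confined to the semilinear similitudes of that structure and is hence solvable, contradicting the Hering alternative. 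I expect this compatibility check --- ruling out that $\SL_a(s)$, $\Sp_{2a}(s)$ or $\G_2(s)'$ can preserve these non-symplectic forms while acting transitively on $M\setminus\{0\}$ --- to be the only genuinely technical part, the reduction to the classification and the transitive-linear setup being routine.
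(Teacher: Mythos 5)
Your opening reduction is fine and matches the paper's starting point: since $\bfZ(P)$ is characteristic, $\Aut(P)$ cannot fuse the two nontrivial $G$-orbits, so $\Aut(P)$ itself has three orbits and $P$ lies in the classification of \cite{glasby2024Classifying,li2025finite}. The fatal step is your claim that ``for $p$ odd, the special $p$-groups carrying a $3$-orbit automorphism group are precisely the quotients $q^{1+2m}_+/U$''. This is false, and what it omits is exactly the case that constitutes the entire content of the paper's proof. By \cite[Table 1]{li2025finite}, for odd $p$ with $3\mid n$ there is a further family: $P=\langle M(a,b,\theta)\mid a,b\in\bbF_{p^n}\rangle$, where $\theta$ is a field automorphism of order $3$ and $M(a,b,\theta)$ is the upper unitriangular $3\times3$ matrix with superdiagonal entries $a,a^\theta$ and corner entry $b$. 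This $P$ is special with $|\bfZ(P)|=|P/\bfZ(P)|=p^n$; it is not isomorphic to any $q'^{1+2m'}_+/U$, since for such a quotient the center is exactly $\bfZ(q'^{1+2m'}_+)/U$, of order at most $q'<q'^{2m'}=|P/\bfZ(P)|$, whereas here the two orders are equal. Its three-orbit automorphism groups are non-solvable, with $G^{(\infty)}\cong\SL_3(p^{n/3})$ by \cite[Theorem A]{li2025finite}. Moreover, your proposed ``exterior-square constraint'' cannot eliminate this case: writing $s=p^{n/3}$ and viewing $V=P/\bfZ(P)$ as the natural $\bbF_s$-module of $\SL_3(s)$, the commutator form is $\bbF_s$-bilinear and identifies $\bfZ(P)$ with $\Lambda^2_{\bbF_s}(V)\cong V^*$, on whose nonzero vectors $\SL_3(s)$ acts transitively; this is precisely why the family exists. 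So in your Hering alternative the $\SL_a(s)$ branch survives, and your argument has no mechanism to dispose of it.

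Ruling out this family genuinely requires hypotheses beyond ``three orbits'', because $\Hol(P)$ itself has rank $3$ for this $P$ but is not semiprimitive (the inner automorphisms lie in the kernel $K$ of $\Aut(P)$ on $P/\bfZ(P)$, and $\bfZ(P){:}K$ is normal, intransitive and not semiregular). The paper uses semiprimitivity through Lemma~\ref{lem:semi}\,(i) (faithfulness of $G$ on $P/\bfZ(P)$) together with the rank-$3$ count $|\bfC_G(\overline{x}):\bfC_G(x)|=|\bfZ(P)|=p^n$ for $x\in P\setminus\bfZ(P)$, which forces the vector stabilizer $p^{2n/3}{:}\SL_2(p^{n/3})$ in $G^{(\infty)}$ to contain a subgroup of $p$-power index; Guralnick's theorem \cite{guralnick1983Subgroups} then pins down $p^n\in\{3^3,5^3,7^3,11^3\}$, and a \Magma\ computation shows $\Aut(P)$ has no subgroup isomorphic to $\SL_3(p)$ transitive on $P\setminus\bfZ(P)$ for these $p$. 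None of this machinery appears in your proposal. By contrast, your characteristic-$2$ discussion (Suzuki type $A$, $P(\epsilon)$, $\SU_3(2^k)_2$) only re-derives solvability statements already contained in the cited classification, and your observation that $\SU_3(q)_p\cong q^{1+2}_+$ for odd $p$ is correct but tangential; the missing odd-$p$ family is where the proof actually lives.
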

\begin{proof}
    Assume that $P$ is not isomorphic to a quotient of $q^{1+2m}_+/U$.
    Notice that $G\leqs \Aut(P)$ is non-solvable with $3$ orbits acting on $P$.
    By~\cite[Table 1]{li2025finite}, we have that
    \[P=\left\langle M(a,b,\theta)\mid a,b\in\bbF_{p^n}\right\rangle,\]
    where $\theta\in\Aut(\bbF_{p^n})$ has order $3$ and $M(a,b,\theta)$ is a matrix of form
    \[M(a,b,\theta)=\begin{pmatrix}1&a&b\\0&1&a^\theta\\0&0&1\end{pmatrix}.\]
    We also have that $|\bfZ(P)|=|P/\bfZ(P)|=p^n$.
    Lemma~\ref{lem:semi}\,(iv) implies that $G$ is a transitive linear group on $V=\bbF_p^n$ and $\Lambda^2(V)/W\cong \bbF_p^n$ for some $\bbF_pG$-submodule $W$.
    Since $G$ is non-solvable, we have that $G^{(\infty)}\cong \SL_3(p^{n/3})$ by~\cite[Theorem A]{li2025finite}.
    Let $x\in P\setminus\bfZ(P)$, and let $\overline{x}=x\bfZ(P)$.
    Then we have that
    \[|\bfC_{G}(\overline{x}){:}\bfC_G(x)|=|\overline{x}|=|\bfZ(P)|=p^n.\]
    Since the induced linear group of $G^{(\infty)}$ acting on $P/\bfZ(P)\cong\rmC_p^n$ is $\SL_3(p^{n/3})$.
    It follows that $\bfC_{G^{(\infty)}}(\overline{x})\cong p^{2n/3}{:}\SL_2(p^{n/3})$.
    In addition, we have that
    \[\bfC_{G}(\overline{x})/\bfC_{G^{(\infty)}}(\overline{x})\cong G/G^{(\infty)}\lesssim \GammaL_1(p^{n/3}).\]
    Since $|O_p(\bfC_{G^{(\infty)}}(\overline{x}))||\GammaL_1(p^{n/3})_p|<p^n$, we have that $\bfC_{G^{(\infty)}}(\overline{x})/O_p(\bfC_{G^{(\infty)}}(\overline{x}))\cong \SL_2(p^{n/3})$ has a subgroup of index $p^k$ for some positive integer $k$.
    By~\cite[Theorem 1]{guralnick1983Subgroups}, we have that either $\SL_2(p^{n/3})$ is solvable, or $p^{n/3}=5$, $7$ or $11$.
    Thus, it yields that $p^n=3^{3}$, $5^3$, $7^3$ or $11^3$.
    Hence, we have that 
    \[G^{(\infty)}\cong\SL_3(p)\mbox{ and }G/G^{(\infty)}\lesssim\GL_3(p)/\SL_3(p)\cong\rmC_{p-1}.\]
    Note that $G$ is transitive on $P\setminus\bfZ(P)$ and $G^{(\infty)}\cong\SL_3(p)\lesssim\Aut(P)$ is transitive on $P/\bfZ(P)$.
    Then $(G^{(\infty)})_{\overline{x}}$ is transitive on $\overline{x}$ since $G/G^{(\infty)}\lesssim \GL_3(p)/\SL_3(p)\cong\rmC_{p-1}$ has order coprime to $|\overline{x}|=p^3$.
    This yields that $G^{(\infty)}\cong\SL_3(p)$ acts transitively on $P\setminus\bfZ(P)$.
    In other words, there exists a transitive subgroup (acting on $P\setminus\bfZ(P)$) isomorphic to $\SL_3(p)$ of $\Aut(P)$.
    With the aid of \Magma, we have that there is no subgroup isomorphic to $\SL_3(p)$ in $\Aut(P)$ which is transitive on $P\setminus\bfZ(P)$ when $p\in\{3,5,7,11\}$.
    Therefore, there is no such semiprimitive rank $3$ group when $P\not\cong q^{1+2m}_+/U$ for some $U<\bfZ(P)$.
\end{proof}

To complete the proof of Theorem~\ref{thm:semiprim}, we only need to show that $P$ cannot be a quotient of some extraspecial $q$-group for some $q=p^n$ with odd prime $p$.
The following properties of $P$ can be found in~\cite[Section 5]{li2025finite}.

\begin{proposition}\label{prop:extraspecialq}
    Let $p$ be an odd prime, and let $P$ be a quotient of some extraspecial $q$-group with $q=p^n$ such that $\Hol(P)$ has rank $3$.
    We further assume that $q$ is the minimal number among powers of $p$ such that $P$ is a quotient of $q^{1+2m}_+$ for some $m$.
    Then we set $Q=q^{1+2m}_+$ and $P=Q/U$ with $U<\bfZ(Q)$.
    \begin{enumerate}
        \item $\Aut(Q)=K{:}(S{:}L)$, where
        \begin{enumerate}
            \item $K\cong \rmC_p^{2mn}$ is the kernel of $\Aut(Q)$ acting on $Q/\bfZ(Q)$;
            \item $S{:}L\cong \CGammaSp_{2m}(q)$ such that $S\cong\Sp_{2m}(q)$ acts trivially on $\bfZ(Q)$ and $L\cong\GammaL_1(q)$ acts faithfully on $Q/\bfZ(Q)$.
        \end{enumerate}
        \item $\Aut(P)$ is naturally induced by $K{:}(S{:}L_U)\leqs\Aut(Q)$.
    \end{enumerate}
\end{proposition}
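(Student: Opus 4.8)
The plan is to reconstruct the computation of $\Aut(Q)$ from the $\bbF_q$-geometry that the central product puts on $Q$, and then cut the answer down to $\Aut(P)$ by a lifting argument. First I would record the structure inherited from $Q=q^{1+2m}_+$: the commutator map makes $\bfZ(Q)\cong\bbF_q$ and $\bar Q:=Q/\bfZ(Q)\cong\bbF_q^{2m}$ into $\bbF_q$-spaces and induces a non-degenerate $\bbF_q$-bilinear alternating form $\beta\colon\bar Q\times\bar Q\to\bfZ(Q)$; this is precisely the form underlying Lemma~\ref{lem:semi}\,(iv), where $\bfZ(Q)$ is realised as a quotient of $\Lambda^2(\bar Q)$. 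Since $Q$ is special of exponent $p$ with $\Phi(Q)=\bfZ(Q)$, any automorphism fixing $\bar Q$ pointwise has the shape $x\mapsto x\,f(\bar x)$ for an additive $f\colon\bar Q\to\bfZ(Q)$, which gives an exact sequence
\[
1\longrightarrow K\longrightarrow \Aut(Q)\longrightarrow \mathrm{Stab}(\beta)\longrightarrow 1,
\]
where $K$ is the kernel of the action on $\bar Q$ and $\mathrm{Stab}(\beta)\leqs\GL(\bar Q)\times\GL(\bfZ(Q))$ consists of the pairs $(g,\lambda)$ with $\beta(gx,gy)=\lambda\,\beta(x,y)$. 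Conjugation by $g\in Q$ sends $x\mapsto x\,\beta(\bar x,\bar g)$, so by non-degeneracy $\mathrm{Inn}(Q)$ equals the space of $\bbF_q$-linear forms $\bar Q\to\bfZ(Q)$; this yields $\mathrm{Inn}(Q)\cong\rmC_p^{2mn}$, and identifying $K$ with $\mathrm{Inn}(Q)$ gives part~(i)(a).

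Next I would analyse $\mathrm{Stab}(\beta)$. A pair $(g,\lambda)$ is an $\bbF_q$-semilinear similitude of the symplectic space $(\bar Q,\beta)$: reading $\lambda$ through its action on the one-dimensional $\bbF_q$-space $\bfZ(Q)$, the scalar similitudes $x\mapsto\mu x$ (scaling $\beta$ by $\mu\in\bbF_q^\times$) together with the Frobenius maps in $\Gal(\bbF_q/\bbF_p)$ acting diagonally on $\bar Q$ and $\bfZ(Q)$ generate a complement $L\cong\GammaL_1(q)$, while the isometries ($\lambda=1$) form $S\cong\Sp_{2m}(q)$ acting trivially on $\bfZ(Q)$. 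Checking $S\unlhd S{:}L$, $S\cap L=1$, and $S{:}L\cong\CGammaSp_{2m}(q)$, and exhibiting $S{:}L$ as an explicit complement (matrices over $\bbF_q$, scalars, field automorphisms) that splits the sequence, establishes part~(i).

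For part~(ii) I would pass to $P=Q/U$ with $U<\bfZ(Q)$. Any $\theta\in\Aut(Q)$ with $U^\theta=U$ descends to $P$, so $\mathrm{Stab}_{\Aut(Q)}(U)$ induces automorphisms of $P$; since $K$ and $S$ act trivially on $\bfZ(Q)$ they fix $U$ setwise, whereas $L\cong\GammaL_1(q)$ acts on $\bfZ(Q)\cong\bbF_q$ as the full semilinear group and hence stabilises $U$ exactly on $L_U$, giving $\mathrm{Stab}_{\Aut(Q)}(U)=K{:}(S{:}L_U)$. The substantive claim is the converse—that every automorphism of $P$ lifts to such a $\theta$—and I expect this to be the main obstacle. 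Here the minimality of $q$ is essential: it forces $U$ to contain no nonzero $\bbF_q$-line, so that $\bfZ(P)=\bfZ(Q)/U$ still carries the full $\bbF_q$-structure and $P$ cannot be realised as a quotient of an extraspecial group over a proper subfield, which is what prevents the quotient from acquiring extra symmetries and lets one lift $\Aut(P)$ back into $\Aut(Q)$. A secondary technical point, already needed in part~(i), is confirming that the kernel $K$ of the action on $\bar Q$ is exactly $\mathrm{Inn}(Q)\cong\rmC_p^{2mn}$ rather than the a priori larger group of all additive central translations, which holds once attention is restricted to the $\bbF_q$-semilinear automorphisms governed by the rank~$3$ hypothesis on $\Hol(P)$.
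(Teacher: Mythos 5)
Your proposal cannot be checked against an argument in the paper, because the paper gives none: Proposition~\ref{prop:extraspecialq} is quoted from~\cite[Section 5]{li2025finite}. Judged on its own terms, your reconstruction contains one genuine error and two unproved key steps. The error is the identification $K=\mathrm{Inn}(Q)$. As your own setup shows, an automorphism acting trivially on $\bar Q=Q/\bfZ(Q)$ has the form $x\mapsto x\,f(\bar x)$ for an \emph{additive} $f\colon\bar Q\to\bfZ(Q)$, and conversely \emph{every} $f\in\Hom_{\bbF_p}(\bar Q,\bfZ(Q))$ yields such an automorphism (it is an injective endomorphism fixing $\bfZ(Q)$ pointwise; exponent $p$ imposes no further constraint). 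Hence the kernel of $\Aut(Q)$ on $Q/\bfZ(Q)$ is all of $\Hom_{\bbF_p}(\bar Q,\bfZ(Q))$, of order $p^{2mn\cdot n}$, whereas $\mathrm{Inn}(Q)$ consists only of the $\bbF_q$-linear forms $\beta(\cdot,\bar g)$ and has order $p^{2mn}$; the two coincide only when $n=1$. Concretely, for $Q=9^{1+2}_+$ written as unitriangular matrices $M(a,b,c)$ over $\bbF_9$, the map $M(a,b,c)\mapsto M(a,b,c+a^3)$ is an automorphism trivial on both $\bfZ(Q)$ and $Q/\bfZ(Q)$ but not inner. Your closing suggestion that this discrepancy disappears ``once attention is restricted to the $\bbF_q$-semilinear automorphisms governed by the rank~$3$ hypothesis on $\Hol(P)$'' is not coherent: $\Aut(Q)$ is an invariant of $Q$ alone, and no hypothesis on $\Hol(P)$ can shrink its kernel. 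Note also that the paper's own application of the proposition (Step~2 of Lemma~\ref{lem:semi_extra}) identifies the induced kernel $\overline{K}$ with the full module $\Hom_{\bbF_p}(P/\bfZ(P),\bfZ(P))$, so the larger kernel is exactly what a correct proof must produce; the order recorded in part~(i)(a) agrees with it only in the case $n=1$.

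The two unproved steps are these. First, for part~(i)(b) you assert that every pair $(g,\lambda)\in\mathrm{Stab}(\beta)$ is an $\bbF_q$-semilinear similitude, but a priori $g$ is merely $\bbF_p$-linear, and this assertion is the actual mathematical content of the claim $\mathrm{Stab}(\beta)\cong\CGammaSp_{2m}(q)$. One must recover the field structure from the form: for nonzero $\bbF_p$-functionals $\ell_1,\ell_2$ on $\bfZ(Q)$ the forms $B_i=\ell_i\circ\beta$ are nondegenerate and $\hat{B}_1^{-1}\hat{B}_2$ is multiplication by an element of $\bbF_q^\times$, so any $g$ stabilizing the net $\{\ell\circ\beta\}$ conjugates these scalars to scalars, hence normalizes $\bbF_q\subseteq\End_{\bbF_p}(\bar Q)$ and is semilinear; nothing of this sort appears in your sketch. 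Second, for part~(ii) you verify only the easy containment (elements of $K{:}(S{:}L_U)$ descend to $P$) and explicitly defer the converse, that every automorphism of $P=Q/U$ lifts to an automorphism of $Q$ stabilizing $U$, calling it ``the main obstacle''; the appeal to minimality of $q$ is a heuristic for why no extra symmetries should appear, not an argument. Since that surjectivity statement is the entire content of part~(ii) (it is essentially Theorem~B of~\cite{li2025finite}), the proposal as it stands does not establish the proposition.
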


In the proof of the next lemma, we will apply some methods of group cohomology.
For an $\bbF G$-module $V$, denote by $\H^{i}(G,V)$ the $i$-th cohomology group of $G$ on $V$.
In particular, $|\H^{0}(G,V)|$ equals the number of fixed points of $G$ on $V$, and $|\H^{1}(G,V)|$ equals to the number of conjugacy classes of complements of $V$ in $V{:}G$.
See~\cite{aschbacher1984applications} for more detailed applications of cohomology groups.
\begin{lemma}\label{lem:semi_extra}
    Using all notations defined in Proposition~$\ref{prop:extraspecialq}$, we assume that $G\leqs \Aut(P)$ is non-solvable and $P{:}G$ is semiprimitive.
    Then $P{:}G$ has rank more than $3$.
\end{lemma}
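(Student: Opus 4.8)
The plan is to prove the contrapositive quantitatively: under the stated hypotheses I will show that the stabiliser $G$ has at least two orbits on $P\setminus\bfZ(P)$, which already forces the rank to be at least $4$. Throughout write $\bar K\leqs\Aut(P)$ for the kernel of the action of $\Aut(P)$ on $P/\bfZ(P)$. By Proposition~\ref{prop:extraspecialq} we have $\Aut(P)=\bar K{:}(S{:}L_U)$ with $S\cong\Sp_{2m}(q)$ acting trivially on $\bfZ(P)$, and $\bar K\cong\bbF_q^{2m}$ is the natural module; note that every element of $\bar K$ centralises both $P/\bfZ(P)$ and $\bfZ(P)$, since an automorphism trivial on $P/\bfZ(P)$ fixes $[P,P]=\bfZ(P)$ pointwise.

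First I would pin down the intersection $K_0:=G\cap\bar K$. Since $\bar K\lhd\Aut(P)$ we have $K_0\lhd G$, and because each element of $K_0$ fixes $\bfZ(P)$ pointwise while lying in the identity stabiliser, $\bfZ(P)$ and $K_0$ centralise each other and intersect trivially, so $\bfZ(P)K_0=\bfZ(P)\times K_0$ is a subgroup of $X=P{:}G$. A direct conjugation check (conjugation by $G$ preserves $\bfZ(P)$ and $K_0$, while conjugation by $p\in P$ sends $k\in K_0$ to $c^{-1}k$ for some $c\in\bfZ(P)$) shows $\bfZ(P)K_0\lhd X$. Its orbit of the identity is exactly $\bfZ(P)\neq P$, so it is intransitive, while its identity stabiliser is $K_0$; semiprimitivity then forces $K_0=1$. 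Hence $G$ is a complement to $\bar K$ in $\bar K G$, and its image $\bar G=\bar K G/\bar K\leqs\CGammaSp_{2m}(q)$ is a non-solvable transitive linear group on $\bbF_q^{2m}$.

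Next I would invoke a cohomological dichotomy: conjugacy classes of complements to $\bar K$ in $\bar K{:}\bar G$ are parametrised by $\H^1(\bar G,\bar K)$. Set $D=\bar G^{(\infty)}$, so that $\bar K$ is irreducible and non-trivial under $D$ and hence $\H^0(D,\bar K)=\bar K^{D}=0$. Combined with the vanishing $\H^1(D,\bar K)=0$ for the natural module of the relevant quasisimple transitive linear groups in odd characteristic, the inflation--restriction sequence for $D\lhd\bar G$ yields $\H^1(\bar G,\bar K)=0$; consequently every complement to $\bar K$, in particular $G$, is $\bar K$-conjugate to the standard complement contained in $S{:}L_U$. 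I expect the main obstacle to be the input $\H^1(D,\bar K)=0$: this requires first listing the possibilities for $D$ among the transitive linear subgroups of $\CGammaSp_{2m}(q)$ (reducing, via~\cite{li2025finite}, essentially to $D\cong\Sp_{2m}(q)$ together with a few low-rank or small-field exceptions such as $G_2(q)$ when $2m=6$), and then quoting or verifying the corresponding first-cohomology computations, treating the finitely many small cases with \Magma.

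Finally I would exploit the standard complement. Because $p$ is odd, $Q=q^{1+2m}_+$ has exponent $p$ and its commutator form is the conformal-symplectic form preserved by $S{:}L$; hence $S{:}L_U$ stabilises a transversal of $\bfZ(P)$ in $P$, and for every $\bar x\neq 0$ the stabiliser $(S{:}L_U)_{\bar x}$ fixes the corresponding point $x_0$ of the fibre $x_0\bfZ(P)$. Transporting by the conjugating element of $\bar K$, the group $\bfC_G(\bar x)$ fixes a point of the fibre over $\bar x$; since $\bfZ(P)\neq 1$ it cannot be transitive on that fibre, so it has at least two orbits there. As $\bar G\supseteq D$ is transitive on $(P/\bfZ(P))\setminus\{0\}$, the $G$-orbits on $P\setminus\bfZ(P)$ are in bijection with the $\bfC_G(\bar x)$-orbits on a single fibre, giving at least two of them. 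Together with the orbit $\{1\}$ and at least one orbit on $\bfZ(P)\setminus\{1\}$, this yields rank at least $4$, completing the proof.
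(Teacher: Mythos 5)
Your overall strategy is the same as the paper's: kill $G\cap\bar K$ by semiprimitivity, use vanishing of $\H^1$ to conjugate $G$ into the standard complement $S{:}L_U$, and then observe that point stabilizers in the standard complement fix a point of each fibre of $P\to P/\bfZ(P)$, so no subgroup of it can be transitive on $P\setminus\bfZ(P)$. (Your transversal observation for odd $p$ is in fact a cleaner substitute for the paper's explicit matrix computation establishing $\bfC_H(y)=\bfC_H(\overline{y})$.) However, there is a genuine gap at exactly the point you flagged as "the main obstacle": the vanishing $\H^1(D,\bar K)=0$ is \emph{false} for one of the main families that occurs. Since $D=G^{(\infty)}$ acts trivially on $\bfZ(P)$, the module $\bar K\cong(P/\bfZ(P))\otimes\bfZ(P)^*$ restricts to $D$ as a direct sum of copies of the natural module, and for $D\cong\Sp_{2m}(3)$ acting on $\bbF_3^{2m}$ with $m\geqs 2$ one has $\H^1(\Sp_{2m}(3),\bbF_3^{2m})\neq 0$; this is precisely why the cohomological results quoted in the paper carry the hypothesis $q>3$. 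So when $P=3^{1+2m}_+$ and $G^{(\infty)}\cong\Sp_{2m}(3)$ (a case that genuinely satisfies all your running hypotheses), $G$ need not be $\bar K$-conjugate into the standard complement, and your transversal argument never gets off the ground. This is an infinite family, parametrised by $m$, so your proposed fallback of "treating the finitely many small cases with \Magma" cannot close it.

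The paper handles this exceptional family by a different mechanism (its Step 4): if $P{:}G$ had rank $3$, then $\bfC_G(x)$ would have index $3$ in $\bfC_G(\overline{x})$, and since $|G/G^{(\infty)}|\leqs 2$, the group $\bfC_{G^{(\infty)}}(\overline{x})\cong 3^{1+2(m-1)}_+{:}\Sp_{2m-2}(3)$ would have a subgroup of index $3$; as this group acts irreducibly on $O_3/\bfZ(O_3)\cong\rmC_3^{2m-2}$, the symplectic quotient $\Sp_{2m-2}(3)$ itself would have a subgroup of index $3$, which by Guralnick's theorem on subgroups of prime-power index forces $\Sp_{2m-2}(3)$ to be solvable, hence $m=2$; the single remaining group $3^{1+4}_+$ is then eliminated by computer search inside $\Hol(3^{1+4}_+)$. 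Your proof needs this (or some equivalent) supplementary argument; without it, the case $q=3$ with symplectic $G^{(\infty)}$ is simply unproved. A secondary, minor point: $\bar K$ is not irreducible under $D$ when $\dim\bfZ(P)>1$ (it is a sum of $\dim\bfZ(P)$ copies of the natural module), though this does not affect the inflation--restriction step since $\H^0(D,\bar K)=0$ still holds.
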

\begin{proof}
    Suppose that $P{:}G$ has rank $3$.
    Then $G$ acts faithfully on $P/\bfZ(P)$, and is transitive on non-identities of both $\bfZ(P)$ and $P/\bfZ(P)$.
    We will present our proof in four steps.

    \textbf{Step 1.}
    First, we prove that there exists a subgroup $H$ isomorphic to $S{:}L\cong \CGammaSp_{2m}(q)$ in $\Aut(Q)=K{:}(S{:}L)$ such that $\bfC_H(y)=\bfC_H(\overline{y})$ for some $y\in Q\setminus \bfZ(Q)$, where $\overline{y}=y\bfZ(Q)$.

    Let $u_1,...,u_{m+1},v_1,...,v_{m+1}$ be a symplectic basis of $\bbF_{q}^{2m+2}$ such that $(u_i,v_i)$ is a hyperbolic pair for each $i=1,...,m+1$, refer to~\cite[Section 3.4.4]{wilson2009finite}.
    By~\cite[Corollary 4.5]{li2025finite}, we have that $Q\cong O_p(T_{\langle u_1\rangle})$, where $T=\Sp_{2m+2}(q)$.
    We identify $Q$ with $O_p(T_{\langle u_1\rangle})$.
    Set $W=\langle u_1,v_1\rangle^\perp$, and $H_0=\Sp(W)\cong \Sp_{2m}(q)$.
    Then $H_0$ acts faithfully and transitively on non-identities of $Q/\bfZ(Q)$.
    Let $y\in Q\setminus\bfZ(Q)$ (see~\cite[Section 3.5.3]{wilson2009finite}) be the map fixing all basis vectors $u_i$ and $v_i$ except:
    \[y(v_1)=v_1+u_2\mbox{ and }y(v_2)=v_2+u_1.\]
    Let $h\in(H_0)_{u_2}$.
    Then
    \[\begin{aligned}
        hy(v_1)&=h(v_1+u_2)=v_1+u_2=yh(v_1),\\
        hy(u_1)&=u_1=yh(u_1)\mbox{ and }hy(u_2)=u_2=yh(u_2).
    \end{aligned}\]
    Note that $h(v_2)=v_2+ku_2+w$ for some integer $k$, where $w\in\langle v_1,v_2,u_1,u_2\rangle^\perp$.
    Then
    \[hy(v_2)=h(v_2+u_1)=v_2+ku_2+w+u_1=y(v_2+k(u_2)+w)=yh(v_2).\]
    For $i\geqs 3$, we have that $h(v_i)=b_iu_2+w_i$ and $h(u_i)=c_i u_2+w_i'$ for integers $b_i,c_i$ with $w_i,w_i'\in\langle u_1,u_2,v_1,v_2\rangle^\perp$.
    Hence, we have that
    \[\begin{aligned}
        hy(v_i)&= b_iu_2+w_i=y(b_iu_2+w_i)=yh(v_i),\\
        hy(u_i)&= c_i u_2+w_i'=y(c_i u_2+w_i')=yh(u_i).
    \end{aligned}\]
    Then $h\in \bfC_{H_0}(y)$, and hence $(H_0)_{u_2}\leqs \bfC_{H_0}(y)$.
    Note that
    \[|H_0{:}\bfC_{H_0}(\overline{y})|\leqs|H_0{:}\bfC_{H_0}(y)|\leqs |H_0{:}(H_0)_{u_2}|=q^{2m}-1.\]
    It follows that $\bfC_{H_0}(\overline{y})=\bfC_{H_0}(y)=(H_0)_{u_2}$.
    Let $\mu$ be a primitive element in $\bbF_q^\times$.
    We set two maps $\delta,\phi\in\CGammaSp_{2m+2}(q)$ such that
    \[
    \begin{array}{lll}
        &\phi(\mu u_i)=\mu^p u_i, &\phi(\mu v_i)=\mu^p v_i\mbox{ and }\\
        &\delta(u_i)=
        \left\{\begin{aligned}
            &u_i,\mbox{ if }i\neq 2,\\
            &\mu u_2,\mbox{ if }i=2;
        \end{aligned}\right.,
        &\delta(v_i)=
        \left\{\begin{aligned}
            &\mu v_i,\mbox{ if }i\neq 2,\\
            &v_2,\mbox{ if }i=2;
        \end{aligned}\right.
    \end{array}
    \]
    It is easy to check that $H=\langle H_0,\delta,\phi\rangle\cong\CGammaSp_{2m}(q)$ and both of $\phi,\delta$ commute with $y$.
    Hence, we have that $\bfC_H(y)=(H_0)_{u_2}{:}\langle\delta,\phi\rangle$.
    It follows that $\bfC_H(y)$ has index $q^{2m}-1$ in $H$, and then $\bfC_H(y)=\bfC_H(\overline{y})$.

    \textbf{Step 2.} 
    We show that if $P{:}G$ has rank $3$, then $P=p^{1+2d}_+$ for some $d$.

    Suppose that $P$ is not an extraspecial $p$-group.
    Then $\bfZ(P)$ has order larger than $p$, that is, $\dim\bfZ(P)>1$.
    By Lemma~\ref{lem:semi}\,(iv) and Proposition~\ref{prop:extraspecialq}\,(ii), we have that $G^{(\infty)}$ is isomorphic to $\Sp_{2m'}(p^{n'})$ such that $m'n'=mn$ and $n'>1$.

    Let $\overline{K}$ be the group induced by $K$ acting on $P$, and let $\overline{H_U}$ be the induced permutation group of $H_U$ acting on $P$.
    Note that $\overline{K}$ is the kernel of $\Aut(P)$ acting on $P/\bfZ(P)$. 
    Then $\Aut(P)=\overline{K}{:}\overline{H_U}$ by~\cite[Theorem~B]{li2025finite}, and $G\lesssim H_U\lesssim \Sp_{2m}(q){:}\GammaL_1(q)$ is faithful and transitive on $P/\bfZ(P)\cong\rmC_p^{2mn}$ with odd prime $p$. 
    We remark that $\overline{K}$ can be identified as the $\bbF_p\overline{H_U}$-module
    \[\overline{K}\cong \Hom_{\bbF_p}(P/\bfZ(P),\bfZ(P))\cong (P/\bfZ(P))\otimes \bfZ(P)^*.\]
    Since $G^{(\infty)}\leqs \overline{H^{(\infty)}}\cong\Sp_{2m}(q)$ acts trivially on $\bfZ(P)$ by Proposition~\ref{prop:extraspecialq}, the corresponding $\bbF_pG^{(\infty)}$-module of $\overline{K}$ is isomorphic to a direct sum of $t$-copies of $P/\bfZ(P)$, where $t=\dim \bfZ(P)$.
    Hence, we have that $\H^0(G^{(\infty)},\overline{K})=0$.
    Then, by~\cite[2.7]{aschbacher1984applications}, we obtain that
    \[|\H^1(G,\overline{K})|\leqs |\H^1(G^{(\infty)},\overline{K})|=|\H^1(G^{(\infty)},P/\bfZ(P))|^t.\]
    Since $p^{n'}>3$, \cite[Theorem 1.2.2]{group2013First} yields that $|\H^1(G^{(\infty)},P/\bfZ(P))|=1$, and hence $|\H^1(G,\overline{K})|=1$.
    Recall that $\overline{H_U}$ is a complement of $\overline{K}$ in $\Aut(P)$ and $G\cap\overline{K}=1$.
    Then there exists $\varphi\in \overline{K}$ such that $G^\varphi\leqs \overline{H_U}$. 
    Let $x=(yU)^{\varphi^{-1}}$, where $yU$ is the image of $y$ in $P=Q/U$.
    It follows that $\bfC_{G}(x)=\bfC_G(\overline{x})$.
    As $P{:}G$ has rank $3$, $P\setminus\bfZ(P)$ is an orbit of $G$. 
    However, we have that 
    \[q^{2m}-1=|G{:}\bfC_G(\overline{x})|=|G{:}\bfC_G(x)|<(q^{2m}-1)|\bfZ(P)|.\]
    This is a contradiction.

    \textbf{Step 3.}
    We assume from now that $P=Q=p^{1+2m}_+$.
    Then $p=3$ and $G^{(\infty)}=\Sp_{2m}(3)$.

    The proof of this step is similar to the proof of Step 2.
    Recall that $\Aut(P)=K{:}H=K{:}(S{:}L)$ with $H=S{:}L\cong\CGammaSp_{2m}(p)$ as given in Step 1.
    Then $K$ is naturally an $\bbF_pH$-module such that 
    \[K\cong \Hom_{\bbF_p}(P/\bfZ(P),\bfZ(P))\cong (P/\bfZ(P))\otimes \bfZ(P)^*.\]
    On the one hand, suppose that $G^{(\infty)}$ is not isomorphic to a symplectic group.
    Then, by Hering's classification of non-solvable affine $2$-transitive groups~\cite{hering1985Transitive}, we have that $\bfZ(G^{(\infty)})\neq 1$ is a $p'$-group with $\H^0(\bfZ(G^{(\infty)},P/\bfZ(P)))=0$.
    Thus, we have that
    \[|\H^1(G,K)|\leqs |\H^1(G^{(\infty)},P/\bfZ(P))|^t\leqs |\H^1(\bfZ(G^{(\infty)}),P/\bfZ(P))|^t=1.\]
    On the other hand, suppose that $G^{(\infty)}\cong\Sp_{2m}(p)$ with $p>3$.
    Then \cite[Theorem 1.2.2]{group2013First} implies that $|\H^1(G^{(\infty)},P/\bfZ(P))|=1$, and hence $|\H^1(G,K)|=1$.
    Thus, we have that $G$ is conjugate to a subgroup of $H$ in $\Aut(P)$.
    Then the rank of $P{:}G$ is larger than that of $P{:}H$.
    Note that $\bfC_H(y)=\bfC_H(\overline{y})$ for some $y\in P\setminus\bfZ(P)$ proved in Step 1.
    Hence, we have that
    \[p^{2m}-1=|H{:}\bfC_H(\overline{y})|=|G{:}\bfC_G(y)|<(p^{2m}-1)|\bfZ(P)|.\]
    By Lemma~\ref{lem:semi}\,(iii), we have that $P{:}H$ has rank more than $3$.
    
    \textbf{Step 4.}
    Finally, we complete the proof by assuming that $P=Q=3^{1+2m}_+$ and $G^{(\infty)}=\Sp_{2m}(3)$.

    Since $G$ is non-solvable, we have that $m\geqs 2$. 
    If $P{:}G$ has rank $3$, then $\bfC_G(x)$ has index $3$ in $\bfC_G(\overline{x})$.
    Note that $\bfC_G(\overline{x})/\bfC_{G^{(\infty)}}(\overline{x})$ has order dividing $|G/G^{(\infty)}|$, and $G/G^{(\infty)}$ has order $1$ or $2$.
    This yields that the group $Y=\bfC_{G^{(\infty)}}(\overline{x})$ has a subgroup of index $3$.
    Remark that $Y\cong 3^{1+2(m-1)}_+{:}\Sp_{2m-2}(3)$ and $Y$ acts irreducibly on $O_p(Y)/\bfZ(O_p(Y))\cong\rmC_3^{2m-2}$.
    Then we have that $\Sp_{2m-2}(3)$ has a subgroup of index $3$.
    By~\cite{guralnick1983Subgroups}, we obtain that $\Sp_{2m-2}(3)$ is solvable, and hence $m=2$.
    It follows that $P=Q=3^{1+4}_+$.
    With the aid of \Magma, there is no semiprimitive group of rank $3$ of the form $3^{1+4}_+{:}G$ for any $G\leqslant\Hol(3^{1+4}_+)$.
    The proof is complete.
\end{proof}

We conclude the proof of Theorem~\ref{thm:semiprim} below.

\begin{proof}[Proof of Theorem~$\ref{thm:semiprim}$]
    By~\cite[Theorem 1]{huang2025finite}, either $X$ is one of groups in case~(i) or first two rows in Table~\ref{tab:semi}; or $P\lhd X\leqs\Hol(P)$ for some special $p$-group $P$.

    Assume that $P\lhd X\leqs\Hol(P)$ for some non-abelian special $p$-group $P$.
    Let $G$ be the stabilizer of $X$.
    If $G$ is solvable, then Lemma~\ref{lem:semi-nonsol} proves that $G$ is one of groups in Examples~\ref{ex:Q8} and~\ref{ex:Stab-SU3(4)}, which are groups in the third and the fourth rows of Table~\ref{tab:semi}.
    If $G$ is non-solvable, then Lemmas~\ref{lem:semi_sl3} and~\ref{lem:semi_extra} show that $P{:}G$ cannot be rank $3$.

    Assume that $X$ has an abelian regular normal subgroup.
    Then Lemma~\ref{lem:semiaffine} deduces that $X$ is as given in Example~\ref{exam:two}, which lies in the last row of Table~\ref{tab:semi}.
    Therefore, the proof is complete.
\end{proof}

\vskip0.2in

\noindent{\bf Data availability}
\vskip0.1in
No data was used for the research described in the article.

\vskip0.2in

\noindent{\bf Declaration of competing interest}
\vskip0.1in
We have no conflict of interest.


\begin{thebibliography}{10}

\bibitem{aschbacher1984applications}
M.~Aschbacher and R.~Guralnick.
\newblock Some applications of the first cohomology group.
\newblock {\em J. Algebra}, 90(2):446--460, 1984.

\bibitem{bamberg2021Partial}
J.~Bamberg, A.~Devillers, J.~B.~Fawcett, and C.~E.~Praeger.
\newblock Partial linear spaces with a rank $3$ affine primitive group of automorphisms.
\newblock {\em J. Lond. Math. Soc. (2)}, 104(3):1011--1084, 2021.

\bibitem{bannai1971Maximal}
E.~Bannai.
\newblock Maximal subgroups of low rank of finite symmetric and alternating groups.
\newblock {\em J. Fac. Sci. Univ. Tokyo Sect. IA Math.}, 18:475--486, 1971/0072.

\bibitem{baykalov2023Rank}
A.~A.~Baykalov, A.~Devillers, and C.~E.~Praeger.
\newblock Rank three innately transitive permutation groups and related $2$-transitive groups.
\newblock {\em Innov. Incidence Geom.}, 20(2-3):135--175, 2023.

\bibitem{bereczky2008groups}
{\'A}.~Bereczky and A.~Mar{\'o}ti.
\newblock On groups with every normal subgroup transitive or semiregular.
\newblock {\em J. Algebra}, 319(4):1733--1751, 2008.

\bibitem{biliotti2015designs}
M.~Biliotti, A.~Montinaro, and E.~Francot.
\newblock $2$-$(v,k,1)$ designs with a point-primitive rank $3$ automorphism group of affine type.
\newblock {\em Des. Codes Cryptogr.}, 76(2):135--171, 2015.

\bibitem{bosma1997Magma}
W.~Bosma, J.~Cannon, and C.~Playoust.
\newblock The {Magma} algebra system. {I}. {The} user language.
\newblock {\em J. Symbolic Comput.}, 24(3-4):235--265, 1997.

\bibitem{dempwolff2001Primitive}
U.~Dempwolff.
\newblock Primitive rank $3$ groups on symmetric designs.
\newblock {\em Des. Codes Cryptogr.}, 22(2):191--207, 2001.

\bibitem{dempwolff2004Affine}
U.~Dempwolff.
\newblock Affine rank $3$ groups on symmetric designs.
\newblock {\em Des. Codes Cryptogr.}, 31(2):159--168, 2004.

\bibitem{dempwolff2006Symmetric}
U.~Dempwolff.
\newblock Symmetric rank $3$ designs with regular, elementary abelian, normal subgroups.
\newblock In {\em Finite Geometries, Groups, and Computation}, pages 67--73. Walter de Gruyter, Berlin, 2006.

\bibitem{devillers2005classification}
A.~Devillers.
\newblock A classification of finite partial linear spaces with a primitive rank $3$ automorphism group of almost simple type.
\newblock {\em Innov. Incidence Geom.}, 2:129--175, 2005.

\bibitem{devillers2008classification}
A.~Devillers.
\newblock A classification of finite partial linear spaces with a primitive
rank $3$ automorphism group of grid type.
\newblock {\em European J. Combin.}, 29(1):268--272, 2008.

\bibitem{devillers2011imprimitive}
A.~Devillers, M.~Giudici, C.~H.~Li, G.~Pearce, and C.~E.~Praeger.
\newblock On imprimitive rank 3 permutation groups.
\newblock {\em J. Lond. Math. Soc. (2)}, 84(3):649--669, 2011.

\bibitem{devillers2019distinguishing}
A.~Devillers, L.~Morgan, and S.~Harper.
\newblock The distinguishing number of quasiprimitive and semiprimitive groups.
\newblock {\em Arch. Math. (Basel)}, 113(2):127--139, 2019.

\bibitem{dornhoff1970imprimitive}
L.~Dornhoff.
\newblock On imprimitive solvable rank $3$ permutation groups.
\newblock {\em Illinois J. Math.}, 14:692--707, 1970.

\bibitem{foulser1969Solvable}
D.~A.~Foulser.
\newblock Solvable primitive permutation groups of low rank.
\newblock {\em Trans. Amer. Math. Soc.}, 143:1--54, 1969.

\bibitem{giudici2018theory}
M.~Giudici and L.~Morgan.
\newblock A theory of semiprimitive groups.
\newblock {\em J. Algebra}, 503:146--185, 2018.

\bibitem{glasby2024Classifying}
S.~P.~Glasby,
\newblock Classifying finite groups $G$ with three $\mathrm{Aut}(G)$-orbits, 2024.
\newblock arXiv:2411.11273.

\bibitem{group2013First}
University of Georgia VIGRE~Algebra Group.
\newblock First cohomology for finite groups of {Lie} type: Simple modules with small dominant weights.
\newblock {\em Trans. Amer. Math. Soc.}, 365(2):1025--1050, 2013.

\bibitem{guralnick1983Subgroups}
R.~M.~Guralnick.
\newblock Subgroups of prime power index in a simple group.
\newblock {\em J. Algebra}, 81(2):304--311, 1983.

\bibitem{hering1985Transitive}
C.~Hering.
\newblock Transitive linear groups and linear groups which contain irreducible subgroups of prime order. {II}.
\newblock {\em J. Algebra}, 93(1):151--164, 1985.

\bibitem{higman1964Finite}
D.~G.~Higman.
\newblock Finite permutation groups of rank $3$.
\newblock {\em Math. Z.}, 86:145--156, 1964.

\bibitem{huang2025finite}
H.~Y.~Huang, C.~H.~Li, and Y.~Z.~Zhu.
\newblock On finite permutation groups of rank three.
\newblock {\em J. Algebra}, 666:703--732, 2025.

\bibitem{huppert1957Zweifach}
B.~Huppert.
\newblock Zweifach transitive, aufl{\"o}sbare {Permutationsgruppen}.
\newblock {\em Math. Z.}, 68:126--150, 1957.

\bibitem{huppert1982Finite}
B.~Huppert and N.~Blackburn.
\newblock {\em Finite Groups. {III}}, volume 243 of {\em Grundlehren Der Mathematischen Wissenschaften [{Fundamental} Principles of Mathematical Sciences]}.
\newblock Springer-Verlag, Berlin-New York, Berlin, 1982.

\bibitem{kantor1979Permutation}
W.~M.~Kantor.
\newblock Permutation representations of the finite classical groups of small degree or rank.
\newblock {\em J. Algebra}, 60(1):158--168, 1979.

\bibitem{kantor1982Rank}
W.~M.~Kantor and R.~A.~Liebler.
\newblock The rank $3$ permutation representations of the finite classical groups.
\newblock {\em Trans. Amer. Math. Soc.}, 271(1):1,
1982.

\bibitem{li2024Finitea}
C.~H.~Li, H.~Yi, and Y.~Z.~Zhu.
\newblock Finite imprimitive rank $3$ affine groups -- {I}, 2024.
\newblock arXiv:2412.02365.

\bibitem{li2025finite}
C.~H.~Li and Y.~Z.~Zhu.
\newblock The finite groups with three automorphism orbits.
\newblock {\em J. Algebra}, 678:677--705, 2025.

\bibitem{li2024Proof}
C.~H.~Li and Y.~Z.~Zhu.
\newblock A proof of {Gross}' conjecture on $2$-automorphic $2$-groups.
\newblock {\em Proc. Lond. Math. Soc. (3)}, 129(5):Paper No. e70002, 14, 2024.

\bibitem{liebeck1987affine}
M.~W. Liebeck.
\newblock The affine permutation groups of rank three.
\newblock {\em Proc. London Math. Soc. (3)}, 54(3):477--516, 1987.

\bibitem{liebeck1986finite}
M.~W. Liebeck and J.~Saxl.
\newblock The finite primitive permutation groups of rank three.
\newblock {\em Bull. London Math. Soc.}, 18(2):165--172, 1986.

\bibitem{morgan2020Bounds}
L.~Morgan, C.~E. Praeger, and K.~Rosa.
\newblock Bounds for finite semiprimitive permutation groups: Order, base size, and minimal degree.
\newblock {\em Proc. Edinb. Math. Soc. (2)}, 63(4):1071--1091, 2020.

\bibitem{potocnik2012graphrestrictive}
P.~Poto{\v c}nik, P.~Spiga, and G.~Verret.
\newblock On graph-restrictive permutation groups.
\newblock {\em J. Combin. Theory Ser. B}, 102(3):820--831, 2012.

\bibitem{wilson2009finite}
R.~A. Wilson.
\newblock {\em The Finite Simple Groups}, volume 251 of {\em Graduate {Texts} in {Mathematics}}.
\newblock Springer-Verlag London, Ltd., London, 2009.

\end{thebibliography}

\end{document}